\newlength{\hatchspread}
\newlength{\hatchthickness}
\newlength{\hatchshift}
\newcommand{\hatchcolor}{}
\tikzset{hatchspread/.code={\setlength{\hatchspread}{#1}},
         hatchthickness/.code={\setlength{\hatchthickness}{#1}},
         hatchshift/.code={\setlength{\hatchshift}{#1}},
         hatchcolor/.code={\renewcommand{\hatchcolor}{#1}}}
\tikzset{hatchspread=3pt,
         hatchthickness=0.4pt,
         hatchshift=0pt,
         hatchcolor=black}
\NewDocumentCommand{\makeabbrev}{mmm}
 {
  \yoruk_makeabbrev:nnn { #1 } { #2 } { #3 }
 }
\makeabbrev{\textbf}{tbf#1}{a,b,c,d,e,f,g,h,i,j,k,l,m,n,o,p,q,r,s,t,u,v,w,x,y,z,A,B,C,D,E,F,G,H,I,J,K,L,M,N,O,P,Q,R,S,T,U,V,W,X,Y,Z}
\makeabbrev{\textbf}{bf#1}{a,b,c,d,e,f,g,h,i,j,k,l,m,n,o,p,q,r,s,t,u,v,w,x,y,z,A,B,C,D,E,F,G,H,I,J,K,L,M,N,O,P,Q,R,S,T,U,V,W,X,Y,Z}
\makeabbrev{\textsf}{tsf#1}{a,b,c,d,e,f,g,h,i,j,k,l,m,n,o,p,q,r,s,t,u,v,w,x,y,z,A,B,C,D,E,F,G,H,I,J,K,L,M,N,O,P,Q,R,S,T,U,V,W,X,Y,Z}
\makeabbrev{\mathsf}{mss#1}{a,b,c,d,e,f,g,h,i,j,k,l,m,n,o,p,q,r,s,t,u,v,w,x,y,z,A,B,C,D,E,F,G,H,I,J,K,L,M,N,O,P,Q,R,S,T,U,V,W,X,Y,Z}
\makeabbrev{\mathfrak}{mf#1}{a,b,c,d,e,f,g,h,i,j,k,l,m,n,o,p,q,r,s,t,u,v,w,x,y,z,A,B,C,D,E,F,G,H,I,J,K,L,M,N,O,P,Q,R,S,T,U,V,W,X,Y,Z}
\makeabbrev{\mathrm}{mrm#1}{a,b,c,d,e,f,g,h,i,j,k,l,m,n,o,p,q,r,s,t,u,v,w,x,y,z,A,B,C,D,E,F,G,H,I,J,K,L,M,N,O,P,Q,R,S,T,U,V,W,X,Y,Z}
\makeabbrev{\mathbf}{mbf#1}{a,b,c,d,e,f,g,h,i,j,k,l,m,n,o,p,q,r,s,t,u,v,w,x,y,z,A,B,C,D,E,F,G,H,I,J,K,L,M,N,O,P,Q,R,S,T,U,V,W,X,Y,Z}
\makeabbrev{\mathcal}{mc#1}{A,B,C,D,E,F,G,H,I,J,K,L,M,N,O,P,Q,R,S,T,U,V,W,X,Y,Z}
\makeabbrev{\mathbb}{mbb#1}{A,B,C,D,E,F,G,H,I,J,K,L,M,N,O,P,Q,R,S,T,U,V,W,X,Y,Z}
\makeabbrev{\mathscr}{ms#1}{A,B,C,D,E,F,G,H,I,J,K,L,M,N,O,P,Q,R,S,T,U,V,W,X,Y,Z}
\makeabbrev{\mathrm}{#1}{
Id,id,ran,rk,diag,stab,ann,conv,pr,ev,tr,End,Hom,sgn,im,op,can,fin,ext,red,tot,
%
rot,usc,lsc,Lip,lip,bSymLip,osc,AC,loc,coz,z,erf,csch,arcsinh,area,
%
supp,Opt,Adm,Cpl,Geo,GeoOpt,GeoAdm,GeoCpl,reg,
%
bd,co,Ric,Exp,dExp,dist,seg,Seg,cut,fcut,Cut,SDiff,Iso,Isom,diam,cl,Homeo,Diff,Der,vol,dvol,inj,relint, Graph,sub,Tube,codim,thick,thin,core,length,cusps,
%
var,law,Var,Poi,Gam,pa,so,iso,fs,inv,pqi,mix,Cov,
TestF,
}
\makeabbrev{\mathsf}{#1}{CD,BE,MCP,Ent,wMTW,MTW,Ch,RCD,EVI,Rad,dRad,SL,cSL,dSL,ScL,Irr,SC,wFe,VA}
\makeabbrev{\mathsc}{msc#1}{g}
\newcommand{\eps}{\varepsilon}
\renewcommand{\complement}{\mathrm{c}}
\newcommand{\mathsc}[1]{\text{\textsc{#1}}}
\newcommand{\emparg}{{\,\cdot\,}}
\DeclareMathOperator{\eqdef}{\coloneqq}
\let\epsilon\varepsilon
\newcommand{\rar}{\rightarrow}
\newcommand{\diff}{\mathop{}\!\mathrm{d}}						
\newcommand{\abs}[1]{\left\lvert#1\right\rvert}						
\newcommand{\norm}[1]{\left\lVert#1\right\rVert}					
\newcommand{\set}[1]{\left\{#1\right\}}							
\newcommand{\tonde}[1]{\left(#1\right)}							
\newcommand{\ttonde}[1]{\big({#1}\big)}
\newcommand{\quadre}[1]{\left[#1\right]}							
\DeclareSymbolFont{symbolsC}{U}{pxsyc}{m}{n}
\DeclareMathSymbol{\medcirc}{\mathbin}{symbolsC}{7}
\DeclareSymbolFont{symbolsZ}{OMS}{pxsy}{m}{n}
\DeclareMathOperator{\emp}{\varnothing} 
\newcommand{\N}{{\mathbb N}}
\newcommand{\R}{{\mathbb R}}
\newcommand{\restr}{\big\lvert}
\tikzset{cross/.style={cross out, draw=black, minimum size=2*(#1-\pgflinewidth), inner sep=0pt, outer sep=0pt},
cross/.default={4pt}}
\newcommand{\iref}[1]{\ref{#1}}
\newcommand{\comma}{\,\,\mathrm{,}\;\,}
\newcommand{\fstop}{\,\,\mathrm{.}}
\newcommand{\av}[1]{\left\langle#1\right\rangle}
\newcommand{\card}[1]{\abs{#1}}
\newcommand{\blue}[1]{{\color{black}{#1}}}
\newcommand{\prid}{\mathrel{\ooalign{$\lneq$\cr\raise.22ex\hbox{$\lhd$}\cr}}}
\newcommand{\UBL}{L}
\let\temp\phi
\let\phi\varphi
\let\varphi\temp
\newcommand{\be}{\begin{equation*}}
 \newcommand{\ee}{\end{equation*}}
\numberwithin{equation}{section}
\theoremstyle{plain}
\newtheorem{thm}{Theorem}[section]
\newtheorem*{thm*}{Theorem}
\newtheorem*{mthm*}{Main Theorem}
\newtheorem{lem}[thm]{Lemma}
\newtheorem{cor}[thm]{Corollary}
\theoremstyle{definition}
\newtheorem*{defs*}{Definition}
\theoremstyle{remark}
\newtheorem*{ass*}{Assumption}
\newcommand{\enl}[2]{\left(#1\right)_{#2}}
\renewcommand{\paragraph}[1]{\medskip\emph{#1.}\quad}
\begin{document}
\title{Effective contraction of Skinning maps}

\author[T.~Cremaschi]{Tommaso Cremaschi}
\address{Department of Mathematics, University of Southern California, Kaprelian Hall\\ 3620 S.\ Vermont Ave., Los Angeles, CA 90089-2532}
\email{cremasch@usc.edu}
\thanks{T.C.~was partially supported by the National Science  Foundation  under  Grant  No.~DMS-1928930  while participating  in  a program hosted by the Mathematical Sciences Research Institute in Berkeley, California, during the Fall 2020 semester.
}

\author[L.~Dello Schiavo]{Lorenzo Dello Schiavo}
\address{Institute of Science and Technology Austria, Am Campus 1, 3400 Klosterneuburg, Austria}
\email{lorenzo.delloschiavo@ist.ac.at}
\thanks{
L.D.S.\ gratefully acknowledges funding of his current position by the Austrian Science Fund (FWF) grant F65, and by the European Research Council (ERC, grant No.~716117, awarded to Prof.\ Dr.~Jan Maas).
}

\begin{abstract}
Using elementary hyperbolic geometry, we give an explicit formula for the contraction constant of the skinning map over moduli spaces of relatively acylindrical hyperbolic manifolds.
\end{abstract}

\keywords{Skinning map, Poincar\'e series, deformations of hyperbolic manifolds, Kleinian groups.}


\maketitle

\section{Introduction}

Let~$M_1$, $M_2$ be hyperbolic manifolds \blue{of finite-type, i.e. the interior of compact 3-manifolds,} with incompressible boundary, and homeomorphic geometrically finite ends~$E_1\subset M_1$ and~$E_2\subset M_2$.
From a topological point of view, since~$M_1$ and~$M_2$ are tame, \cite{AG2004,CG2006}, the surfaces~$S_i$ corresponding to the boundary of the ends~$E_i$ are naturally homeomorphic.
We can thus glue the two manifolds via an orientation-reversing homeomorphism~$\tau$, and obtain a new topological $3$-manifold~$M=M_1\cup_\tau M_2$.
Usually, one seeks sufficient conditions for~$M$ to admit a complete hyperbolic metric, which is relevant, for example, in the proof of geometrization for hyperbolic manifolds, \cite{Kap2001}. We call this the \emph{glueing problem} for~$M$.
The \emph{skinning map}, described below, was first introduced by W.~P.~Thurston, exactly to study this glueing problem,~\cite{Th1982}.

The moduli space~$GF(M,\mathcal P)$ of all hyperbolic metrics on~$M$ with geometrically finite ends and parabolic locus~$\mathcal P$ is parameterised by the Teichm\"uller space~$\mcT(\partial_0 M)$ with~$\partial_0 M$ the closure in~$\partial M$ of \blue{the complement~$\mathcal P^\complement$ of $\mathcal P$}, viz.\ $GF(M,\mathcal P)=\mathcal T(\partial_0 M)$.
For simplicity, let us here assume that~$\mathcal P$ only contains toroidal boundary components of~$M$.
Now, let~$N\in GF(M,\mathcal P)$ be a uniformization, and~$S\in\pi_0(\partial_0 M)$ be a (non-toroidal) boundary component.
The cover of~$N$ associated to~$\pi_1(S)$ is a quasi-Fuchsian manifold~$N_S$.
The manifold~$N_S$ has two ends,~$A$ and~$B$, of which~$A$ is isometric to the end of~$M$ corresponding to~$S$.
One defines the skinning map~$\sigma_M$ at~$N$ as the conformal structure of the new end~$B$.
As it turns out, the skinning map is an analytic map~$\sigma_M\colon \mathcal T(\partial_0 M)\rar \mathcal T(\overline{\partial_0 M})$, where the bar denotes opposite orientation. 
The glueing instruction determines an isometry~$\tau^*\colon \mathcal T(\partial_0 M)\rar \mathcal T(\overline{\partial_0 M})$, and any fixed point of $\tau^*\circ\sigma_M$ gives a solution to the glueing problem by the Maskit Combination Theorem, e.g.~\cite{Kap2001}.

Given a covering map between Riemann surfaces $\pi\colon Y\rar X$ the \emph{Poincar\'e series operator} is a push-forward operator $\Theta_{Y/X}:Q(Y)\rar Q(X)$, similar to the push-forward of measures, pushing quadrating differentials on~$Y$ to quadratic differentials on~$X$.

In \cite{McM1989}, C.~McMullen showed that the skinning map of an acylindrical manifold~$N$ is contracting, with contraction constant only depending on the topology of~$\partial_0 M$.
Furthermore, he related the skinning map to the Poincar\'e series operator~$\Theta$ by the following formula:
\begin{equation}\label{eq:Intro:McMullen}
\diff \sigma_M^*(\phi)=\sum_{U\in BN} \Theta_{U/X} \left(\phi\vert_U\right) \comma
\end{equation}
where $BN$ is \blue{a} collection of sub-surfaces of $\im(\sigma)$.
When~$M$ is acylindrical and~$P=\emp$, we have that $BN$ is just a collection of disks, the \emph{leopard spots} of~\cite{McM1989}.
If~$P\neq\emp$ and~$M$ is relatively acylindrical, then we can also have punctured disks coming from peripheral cylinders of $M$.

As a consequence of~\eqref{eq:Intro:McMullen}, one can \blue{estimate} the operator norm of the co-derivative map~$\diff\sigma_M^*$ of the skinning map by bounding the Poincar\'e series operator of the corresponding surfaces.
Using \blue{such estimate}, we provide here effective bounds, in terms of the topology of~$\partial_0 M$, on the contraction of the skinning map in the acylindrical case.
This builds on previous work~\cite{BarDil96} of D.~E.~Barret and J.~Diller, who gave an alternative proof of McMullen's estimates on the norm of the Poincar\'e operator, \cite{McM1989}.

Improving on the main result of~\cite{BarDil96} (Thm.~\ref{t:BD} below), we show:

\begin{thm}\label{t:Main} Suppose $X$ is a Riemann surface of finite-type and let $Y$ be a disk or a punctured disk.
Further let~$\pi\colon Y\rar X$ be a holomorphic covering map.
Then, the norm of the corresponding Poincar\'e series operator satisfies:
\begin{align*}
\norm{\Theta}_\op < \frac{1}{1+C_{g,n,\ell}} <1
\end{align*}
\blue{for some constant~$C_{g,n,\ell}>0$ depending only on the topology of $X\cong S_{g,n}$ and the injectivity radius $\ell$ of $X$.}
\end{thm}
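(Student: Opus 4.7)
The plan is to carry out a hyperbolic-geometric refinement of the Barrett--Diller argument (Thm.~\ref{t:BD}). Recall that~\cite{BarDil96} bound $\norm{\Theta}_\op$ by a pointwise hyperbolic density quantity on the cover $\pi\colon Y \to X$, ultimately via a comparison of hyperbolic areas of embedded disks in $Y$ with their images in $X$. The bound $\norm{\Theta}_\op \le 1$ holds because $\pi$ is a local isometry for the hyperbolic metrics; the strict inequality is a consequence of overlap in the push-forward, and my aim is to quantify this overlap in terms of the topology of $X$.

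First, I would revisit the Barrett--Diller density estimate and reformulate it so that $1-\norm{\Theta}_\op$ is controlled from below by a geometric quantity depending only on the injectivity radius of $X$ at some canonical point $x_* \in X$. Concretely, any two distinct preimages $y_1, y_2 \in \pi^{-1}(x_*)$ differ by a nontrivial deck transformation, hence are at hyperbolic distance at least $2\,\inj_X(x_*)$ in $Y$. Packing embedded hyperbolic balls of radius $\inj_X(x_*)$ around each preimage and comparing their total area with that of a correspondingly sized ball in $Y$ yields a quantitative overlap estimate.

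Second, I would extract the topological constant. For $X$ of finite type with parameters $(g,n,\ell)$, Gauss--Bonnet gives $\area(X) = 2\pi \abs{\chi(X)}$ while the Margulis/collar lemma produces a canonical thick-part point $x_*$ whose injectivity radius is bounded below by an explicit function of $(g,n,\ell)$. Substituting this lower bound into the overlap estimate produces $\norm{\Theta}_\op < 1/(1+C_{g,n,\ell})$ for an explicit $C_{g,n,\ell} > 0$ depending only on $g$, $n$, and $\ell$.

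The main obstacle will be the punctured-disk case $Y = \Delta^*$, corresponding to cusps of $X$. Near a cusp the injectivity radius tends to zero and the preimage-separation estimate degenerates, so embedded balls must be replaced by embedded horoballs and the overlap argument redone in the cuspidal geometry; the delicate point is to arrange that both cases --- disk and punctured disk --- yield a unified topological constant $C_{g,n,\ell}$ matching the stated bound.
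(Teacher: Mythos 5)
Your ball-packing heuristic captures the intuition for why the Poincar\'e series operator should contract, but it is not the mechanism of the proof, and the proposal as stated has a genuine gap. The paper (following Barrett--Diller) never argues on the cover $Y$ by separating preimages of a distinguished point $x_*$ and comparing ball areas. The fundamental obstruction to that route is that a normalized $\phi\in Q(Y)$ can be concentrated in an arbitrarily small neighbourhood of a \emph{single} preimage of $x_*$, in which case the other preimages contribute nothing, there is effectively no ``overlap'', and $\norm{\Theta\phi}$ is very close to $\norm{\phi}$. Equality in $\norm{\Theta\phi}\le\norm{\phi}$ is governed by \emph{cancellation} of the complex terms $\phi(y_i)$ in the series, not by area overcounting; a packing argument by itself cannot rule out near-constructive alignment of these terms for a carefully chosen $\phi$.

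What actually closes the gap in the paper is the Barrett--Diller integral estimate~\eqref{keyestimate}, which expresses $1-\norm{\Theta\phi}$ as an integral \emph{over the base $X$} of $m(r)\,\bigl[t^{-1}\area(X\setminus\enl{K}{r})-\length(\partial\enl{K}{r})\bigr]$, where $\psi=\Theta\phi$ and $m(r)$ is the minimum density of $\psi$ on $\partial\enl{K}{r}$ for a set $K$ containing the zeros $Z$ of $\psi$ and the punctures. The bulk of the work is then a Harnack-type control of the density $\av{\psi}$ away from $Z$ (Theorem~\ref{BD4.4}, Lemma~\ref{4.6}) combined with explicit area and length estimates on $X$ (Lemmas~\ref{l:BD5.1} and~\ref{areaest}); the injectivity radius $\ell$ enters through $M(0)\gtrsim \ell\norm{\psi}/\abs{\chi}$ and the collar geometry, not through a packing of balls in $Y$. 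Your proposal contains none of these ingredients and, in particular, does not address the zeros of $\Theta\phi$, which are precisely where any naive density lower bound must fail. Finally, the special role of $Y$ being a disk or punctured disk is not handled via horoballs as you suggest: it is used to invoke Diller's result~\cite{Dil95} to take $t=1$ in~\eqref{keyestimate}, which removes the dependence on the geometry of $Y$. Without the key estimate and the Harnack step, your outline does not yield the stated bound.
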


In contrast with~\cite{BarDil96}, we compute the contraction constant~$C_{g,n,\ell}$ in a completely explicit way and in the case \blue{under examination} without any extra assumptions on~$\norm{\Theta}_\op $.
The constant~$C_{g,n,\ell}$ only depends on: the genus~$g$ of~$X$, the number of punctures~$n$ of~$X$, the length~$\ell$ of the shortest closed geodesic in~$X$.
So, we obtain an explicit bound over the moduli space of geometrically finite hyperbolic manifolds.

Furthermore,~$C_{g,n,\ell}$ is continuous and decreasing as a function of $\ell$, in fact it is linear in $\ell$, and satisfies the following asymptotic expansion for $g,n\gg 1$.
Let~$\chi\eqdef 2g-2+n$ be the Euler characteristic, and~$\kappa\eqdef 3g-3+n$ be the complexity of~$X$.
Then,
\begin{align*}
\log \log \tonde{\tfrac{\ell}{C_{g,n,\ell}} } \asymp \tfrac{4}{\arcsinh(1)}\,\chi^2+\coth\ttonde{\tfrac{\pi}{12}}\,\chi + \pi \sinh\tonde{\tfrac{1}{2}\arcsinh\ttonde{\tanh(\pi/12)}}\,\kappa\fstop
\end{align*}

\paragraph{An application to infinite-type 3-manifolds}
In~\cite{C2018b} the first named author studied the class~$\mathcal M^B$ of infinite-type 3-manifolds~$M$ admitting an exhaustion~$M=\cup_i M_i$ by hyperbolizable 3-manifolds~$M_i$ with incompressible boundary and with uniformly bounded genus.

One can use skinning maps to study the space of hyperbolic metrics on the manifolds in $\mathcal M^B$ that admit hyperbolic structures.
Indeed, consider all manifolds~$M\in \mathcal M^B$ such that for all $i\in\N$ every component $U_i\eqdef \overline{M_i\setminus M_{i-1}}$ is acylindrical. 
By the main results of \cite{C2018b} this guarantees that~$M$ is in fact hyperbolic, which is in general not the case, see~\cite{C20171,C2018c}, or \cite{CS2018,CVP2020} for other examples of infinite-type hyperbolic 3-manifolds.
We can thus think of a (hyperbolic) metric~$g$ on~$M$ as a gluing of (hyperbolic) metrics~$g_i$ on the~$U_i$'s and so it makes sense to investigate the glueing of pairs~$U_i$,~$U_{i+1}$ via skinning maps.

In order to approach the construction of~$g$ in this way, it is helpful to know that the contraction factor of the skinning maps over the Teichm\"uller spaces relative to~$U_i$ stays well below~$1$ uniformly in~$i$.
The latter fact follows from Theorem~\ref{t:Main}, in view of the uniform bound on the genus of the~$M_i$'s.

	 
\section{Notation}
Throughout the work,~$X$ is a hyperbolic Riemann surface of finite type.
Let~$\overline X$ be the compact Riemannian surface obtained by adding a single point to each end of~$X$.
We indicate by
\begin{itemize}
\item $g$ the genus of~$X$;
\item $n$ the cardinality of the set of punctures~$P\eqdef \overline{X}\setminus X$;
\end{itemize}
We may thus regard~$X$ as an element of the \blue{moduli space~$\mathcal M(S_{g,n})$} of the $n$-punctured Riemann surface of genus~$g$.
Further let
\begin{itemize}
\item $\chi\eqdef 2g-2+n$ be the Euler characteristic of~$X$;
\item $\kappa\eqdef 3g-3+n$ be the complexity of~$X$, with the exception of the surface~$S_{0,2}$ for which $\kappa\eqdef 0$.
\end{itemize}
We say that a curve in~$X$ is a \emph{short geodesic} if it is a closed geodesic of length less than $2\arcsinh(1)$, and we define
\begin{itemize}
\item $\Gamma$ the set of short geodesics on~$X$;
\item $\ell\eqdef \min_{\gamma \in \Gamma} \ell(\gamma)$ \blue{(twice)} the \emph{injectivity radius} of~$X$.
\end{itemize}

For any~$A\subset X$, denote by~$\card{A}$ the number of connected components of~$A$, and indicate by $U\in \pi_0(A)$ any of such connected components.
Let~$d$ be the intrinsic distance of~$X$ and further set
\begin{align*}
\enl{A}{s}\eqdef \set{x\in X : \dist(x,A)\leq s}\comma \qquad s>0\fstop
\end{align*}

\paragraph{Regions} Denote by~$D$ the Poincar\'e disk, and set~$D^*\eqdef D\setminus\set{0}$.
The \emph{cusp}~$\mcC_p$ about~$p\in P$ is the image of the punctured disk~$\set{0<\abs{z}<e^{-\pi}}$ under the holomorphic cover~$\pi_p\colon D^*\to X$ about~$p$.

We start by recalling the following well-known fact.
\begin{lem}[{\cite[Thm.~4.1.1]{Bu1992}}]\label{l:Buser}
Let~$\gamma$ be a short closed geodesic in~$X$ of length~$\ell(\gamma)$, and set~$w\eqdef \arcsinh\left(\frac 1{\sinh(\ell(\gamma)/2) }\right)$.
The collar~$\mcC_\gamma$ around~$\gamma$ is isometric to $[-w,w]\times \mbbS^1$ with the metric $\diff\rho^2+\ell(\gamma)^2\cosh^2(\rho)\diff t^2$.
\end{lem}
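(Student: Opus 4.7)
The plan is to pass to the universal cover $\widetilde X=D$ (the hyperbolic plane, which we view here in its upper half-plane model for convenience) and lift $\gamma$ to a complete geodesic $\widetilde\gamma\subset D$. The stabilizer of $\widetilde\gamma$ inside the deck group $\pi_1(X)\subset\mathrm{PSL}_2(\mathbb R)$ is infinite cyclic, generated by a hyperbolic isometry $T$ whose translation length along its axis equals $\ell(\gamma)$. Introducing Fermi coordinates $(\rho,t)$ adapted to $\widetilde\gamma$, where $\rho$ is the signed distance to the axis and $t$ the arclength parameter of the foot of the perpendicular, the hyperbolic metric takes the separated form $\diff\rho^2+\cosh^2(\rho)\diff t^2$. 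Quotienting the strip $\mathbb R\times\mathbb R$ by $\langle T\rangle$ and rescaling $t$ to have unit period produces the model cylinder $\mathbb R\times\mathbb S^1$ equipped with the metric $\diff\rho^2+\ell(\gamma)^2\cosh^2(\rho)\diff t^2$, exactly as claimed.

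The substantive content of the lemma is then the injectivity of the projection $\mathbb R\times\mathbb S^1\to X$ restricted to the strip $\rho\in[-w,w]$ for the prescribed $w=\arcsinh\!\bigl(1/\sinh(\ell(\gamma)/2)\bigr)$. I would argue by contradiction: suppose some $g\in\pi_1(X)\setminus\langle T\rangle$ sends a point $p$ with $\dist(p,\widetilde\gamma)<w$ to another such point. Then the axis of $gTg^{-1}$, which is the image of $\widetilde\gamma$ under $g$, comes within distance $2w$ of $\widetilde\gamma$ at some point; since $\gamma$ is simple (implicit in the setup) the two axes are distinct. Set up the right-angled geodesic pentagon formed by the common perpendicular of the two axes together with two segments of length $\ell(\gamma)/2$ along each axis; a standard hyperbolic-trigonometric identity for such a pentagon yields the sharp threshold $\sinh(w)\sinh(\ell(\gamma)/2)\ge 1$ for the configuration to be compatible with discreteness of $\pi_1(X)$. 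This forces $w\le\arcsinh\!\bigl(1/\sinh(\ell(\gamma)/2)\bigr)$, contradicting our choice of $w$.

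The main obstacle is the sharp trigonometric estimate linking the width of the collar, the length of $\gamma$, and the discreteness of $\pi_1(X)$; equivalently, the verification that no two-generator subgroup $\langle T,gTg^{-1}\rangle$ with distinct axes can simultaneously achieve short translation length and small inter-axis distance. One technical way to bypass a direct pentagon calculation is to invoke J\o rgensen's inequality for the pair $(T,gTg^{-1})$, which rules out the failure of injectivity once $\sinh(w)\sinh(\ell(\gamma)/2)<1$. Conversely, the boundary case $\sinh(w)\sinh(\ell(\gamma)/2)=1$ is realized in the limit by a thrice-punctured sphere configuration, showing that the bound is optimal. Once injectivity is established, the metric identification of the collar with the model $[-w,w]\times\mathbb S^1$ is a direct consequence of the Fermi coordinate description, and the equidistant curves $\{\rho=\mathrm{const}\}$ have length $\ell(\gamma)\cosh(\rho)$ as read off the metric.
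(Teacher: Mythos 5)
The paper does not prove this lemma; it is cited verbatim from Buser \cite[Thm.~4.1.1]{Bu1992}, so any proof you give is necessarily ``a different route.'' Your overall strategy (lift to the universal cover, use Fermi coordinates along the axis of the primitive hyperbolic element $T$ to get the model metric $\diff\rho^2+\cosh^2(\rho)\diff t^2$, quotient and rescale, then reduce embeddedness of the strip $|\rho|<w$ to a lower bound on the distance between $\widetilde\gamma$ and its translates $g\widetilde\gamma$ for $g\notin\langle T\rangle$) is sound and is indeed the standard alternative to Buser's own proof, which instead decomposes the surface into Y-pieces and works inside right-angled hexagons. The setup and the reduction are correct, including the observation that failure of injectivity on the strip yields a distinct translate $g\widetilde\gamma$ at distance $<2w$ from $\widetilde\gamma$.

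The gap is in the trigonometric step, which is the actual content of the lemma. First, the ``right-angled geodesic pentagon formed by the common perpendicular of the two axes together with two segments of length $\ell(\gamma)/2$ along each axis'' is not a well-defined object: you have specified only three of the five sides, and one cannot in general close three prescribed mutually-perpendicular segments into a right-angled pentagon (doing so imposes an extra angle condition at the new vertex that is not automatically satisfied). The correct constructions in the literature use right-angled hexagons built from a pants decomposition (Buser) or Lambert/Saccheri quadrilaterals built from the common perpendicular and a half-period of $T$; in either case one has to say which figure is being used and verify that all the stated right angles actually occur. Second, the concluding logic is inverted: you write that the trigonometric identity ``forces $w\le\arcsinh\bigl(1/\sinh(\ell(\gamma)/2)\bigr)$, contradicting our choice of $w$,'' but $w$ was \emph{defined} to equal $\arcsinh\bigl(1/\sinh(\ell(\gamma)/2)\bigr)$, so this inequality is satisfied and yields no contradiction; the intended conclusion should be that the half-distance $w'$ between the axes satisfies $\sinh(w')\sinh(\ell/2)\ge 1$, hence $w'\ge w$, contradicting $w'<w$. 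Finally, J\o rgensen's inequality, offered as a fallback, gives a Margulis-type collar but not obviously the sharp constant $\sinh w\,\sinh(\ell/2)=1$; invoking it here would require a separate argument. Until the pentagon (or hexagon/quadrilateral) is specified precisely and the inequality is derived with the correct direction, the core of the proof is missing.
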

\blue{Note that in the previous statement the local metric, in Fermi coordinates, is parametrised with $\ell$ speed hence the $\ell^2$ factor.}

We define:
\begin{itemize}
\item the \emph{cusp part}~$X_\cusps$ of~$X$ as~$X_\cusps\eqdef\cup_{p\in P} \, \mcC_p$;
\item the \emph{core}~$X_\core$ of~$X$ as~$X_\core\eqdef X\setminus X_\cusps$;
\item the \emph{thick part}~$X_\thick$ of~$X$ as~$X_\thick\eqdef X_\core\setminus \cup_{\gamma\in \Gamma}\, \mcC_\gamma$;
\item the \emph{thin part}~$X_\thin$ of~$X$ as~$X_\thin\eqdef \overline{X\setminus X_\thick}$.
\end{itemize}

\paragraph{Quadratic differentials}
Let~$T_{1,0}^*X$ be the holomorphic cotangent bundle of~$X$.
A quadratic differential on~$X$ is any section~$\psi$ of~$T_{1,0}^*X\otimes T_{1,0}^*X$, satisfying, in local coordinates,~$\psi=\psi(z)\diff z^2$.
A quadratic differential~$\psi$ is \emph{holomorphic} if its local trivializations~$\psi(z)$ are holomorphic.
To each holomorphic quadratic differential~$\psi$ we can associate a measure~$\abs{\psi}$ on~$X$ defined by~$\abs{\psi}=\abs{\psi(z)}\cdot\abs{\diff z}^2$.
We denote by~$\av{\psi(\emparg)}$ the density of the measure~$\abs{\psi}$ with respect to the Riemannian volume of~$X$.

We say that any~$\psi$ as above is \emph{integrable} if~$\norm{\psi}\eqdef \abs{\psi}\!(X)$ is finite, and we denote by~$Q(X)$ the space of all integrable holomorphic quadratic differentials on~$X$, endowed with the norm~$\norm{\emparg}$.
When~$X$ has finite topological type,~$Q(X)$ is finite-dimensional, its dimension depending only on~$g$ and~$n$.

\paragraph{Constants} Everywhere in this work,~$r,s,t,w$ and~$\eps$ are free parameters.
We shall make use of the following universal constants:
\begin{itemize}
\item $\eps_0\eqdef \arcsinh(1)\approx 0.8813$ the two-dimensional Margulis constant;
\item $c_1\eqdef \coth(\pi/12)\approx 3.9065$;
\item $c_2\eqdef\arcsinh\ttonde{\tanh(\pi/12)}\approx 0.2532$;
\item $c_3\eqdef \frac{\pi \sinh\tonde{\tfrac{1}{2}\arcsinh\ttonde{\tanh(\pi/12)}}}{\arcsinh(\tanh(\pi/12))}\approx 1.5750$;
\item $c_4\eqdef \ttonde{1-\tanh^2(1/2)}^2\approx 0.6185$;
\item $c_5\eqdef 4\pi\ttonde{1+\sinh(1)}\approx 27.3343$;
\item $c_6\eqdef (e c_4)^{e^{2c_3+2}} \approx 76.5904$;
\item $c_7\eqdef \max _x x\cdot\arcsinh\ttonde{\csch(x/2)}\approx 1.5536$.
\end{itemize}

Finally, for simplicity of notation, we shall make use of the following auxiliary constants, also depending on~$X$:
\begin{itemize}
\item $a_1\eqdef 4\abs{\chi}^2/\epsilon +2\kappa \log c_1+2\, c_2\, c_3$;
\item $a_2\eqdef \log(e\, c_4)\, e^{a_1+2(1+c_3)}$.
\end{itemize}

\blue{We denote by~$a\wedge b$ the minimum between two quantities~$a,b\in\R$.}

\section{Outline}

We start by recalling the results of D.E.~Barret and J.~Diller \cite{BarDil96} that we make explicit using classic hyperbolic geometry. The main result of \cite{BarDil96} is:
\begin{thm}[{\cite[Thm.~1.1]{BarDil96}}]\label{t:BD}
Suppose $X$, $Y$ are Riemman surfaces of finite-type and let $\pi\colon Y\rar X$ be a holomorphic covering map. Then, the norm of the corresponding Poincar\'e operator satisfies:
\begin{equation*}
\norm{\Theta}_{\op}\eqdef \sup_{\substack{\phi\in Q(Y)\\\norm{\phi}=1}} \norm{\Theta \phi} <1-k<1 \fstop
\end{equation*}
Furthermore, $k>0$ may be taken to depend only on the topology of $X$, $Y$, and the length $\ell$ of the shortest closed geodesic on $X$. As a function of $\ell$, \blue{the number} $k$ may be taken to be continuous and increasing.
\end{thm}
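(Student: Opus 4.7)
The plan is to prove Theorem \ref{t:Main} by combining elementary hyperbolic geometry on $X$ via its thick--thin decomposition with explicit Schwarz--Pick-type estimates on the Bergman density $\langle\psi\rangle$ of a unit quadratic differential $\psi\in Q(X)$, making every constant in the resulting bound explicit. Unlike the soft compactness step in \cite{BarDil96}, which in the disk/punctured-disk case requires an auxiliary a priori bound on $\norm{\Theta}_\op$, our approach will directly track the ratio $\norm{\Theta\psi}/\norm{\psi}$ by a uniform pointwise estimate on $\langle\psi\rangle$ across $X$, then dualise to recover the operator-norm bound.

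First, I would decompose $X=X_\thick\cup\bigcup_\gamma\mcC_\gamma\cup\bigcup_p\mcC_p$ and estimate $\langle\psi\rangle$ on each piece separately. On the collars $\mcC_\gamma$ and cusps $\mcC_p$, Lemma \ref{l:Buser} and the standard cusp parametrization reduce the pointwise estimate to a direct computation against the explicit conformal factor, yielding bounds in terms of the collar half-width $w=\arcsinh\ttonde{1/\sinh(\ell(\gamma)/2)}$ and of the cusp parameter $e^{-\pi}$; the constants $c_2,c_3,c_7$ arise naturally in this computation. On $X_\thick$, Gauss--Bonnet gives $\area(X_\thick)\leq 2\pi\abs\chi$, the $2$-dimensional Margulis lemma yields injectivity radius $\geq\epsilon_0$, and a pants decomposition into $\kappa$ pieces of bounded diameter (contributing the $\log c_1$ factor) produces the explicit diameter estimate captured by $a_1=4\abs\chi^2/\epsilon+2\kappa\log c_1+2c_2c_3$.

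Second, I would propagate the pointwise control across $X_\thick$ by iterating the hyperbolic Schwarz lemma along a chain of overlapping hyperbolic disks of radius comparable to $\epsilon_0$. Each iteration loses at most a Koebe-type factor bounded by $c_4=(1-\tanh^2(1/2))^2$, and a chain of length $O(a_1)$ across $X_\thick$ then produces the double-exponential constant $a_2=\log(e\,c_4)\,e^{a_1+2(1+c_3)}$. Assembling the thick, collar, and cusp contributions by partitioning the integral $\norm{\Theta\psi}_{L^1(X)}$ then yields $\norm{\Theta}_\op<1/(1+C_{g,n,\ell})$ with $C_{g,n,\ell}$ linear in $\ell$; unwinding the leading terms of $a_1$ and $a_2$ gives the stated asymptotic for $\log\log(\ell/C_{g,n,\ell})$.

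The main obstacle is the double-exponential dependence of $C_{g,n,\ell}$ on $\chi$ and $\kappa$, and showing that it arises in the way predicted by the asymptotic. It originates from the worst-case configurations in which the diameter of $X_\thick$ grows like $\abs\chi^2/\epsilon_0$ (for instance, long chains of pants glued along the thinnest admissible geodesics), forcing the Schwarz--Pick iteration to traverse a chain of length $\Omega(\abs\chi^2)$. The delicate technical point is to ensure that the gluings between $X_\thick$ and the collar/cusp pieces do not introduce additional losses beyond those captured by $c_2,c_3,c_4,c_7$, which requires a careful matching of the conformal factor across the boundaries between thick and thin pieces.
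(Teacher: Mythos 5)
The proposal declares, ``The plan is to prove Theorem~\ref{t:Main}\dots'', and indeed what you describe is closer to an attempt at Theorem~\ref{t:Main} than to Theorem~\ref{t:BD}. In either case the central difficulty is the same, and your outline does not address it.

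The key gap is that you never supply the mechanism by which $\Theta$ actually loses $L^1$-mass. A priori one only has $\norm{\Theta\phi}_1\leq \norm{\phi}_1$ by the triangle inequality applied to the fibrewise sum; strict inequality comes from cancellation between sheets of the covering, and all of the quantitative content of the Barrett--Diller argument lives precisely in converting that cancellation into the key estimate
\begin{equation*}
1-\norm{\Theta\phi}_1\geq \int_0^{r_0}m(r)\quadre{t^{-1}\area\ttonde{X\setminus \enl K r}-\length\ttonde{\partial \enl K r}}\diff r\comma
\end{equation*}
which is inequality~\eqref{keyestimate} in the paper. Your outline replaces this by ``directly track the ratio $\norm{\Theta\psi}/\norm{\psi}$ by a uniform pointwise estimate on $\langle\psi\rangle$ across $X$, then dualise.'' A pointwise bound on the Bergman density of $\psi=\Theta\phi$, propagated by Schwarz--Pick iteration, gives at best $\norm{\Theta\phi}_1\leq \max\langle\Theta\phi\rangle\cdot\area(X)$ and says nothing about why this is strictly smaller than $\norm{\phi}_1$. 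The ``dualise'' step is never made precise and cannot close this gap, since $\Theta$ is a push-forward along a covering and there is no adjoint reformulation in play here. You would need to derive something equivalent to the Barrett--Diller key estimate, in which the zero set $Z$ of $\Theta\phi$ and the level-set geometry around $K\supset Z$ appear explicitly, and that derivation is missing from the outline.

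A secondary point: you are also misidentifying what the paper avoids. The paper does \emph{not} avoid the Barrett--Diller estimate by replacing it with a compactness-free alternative; it takes \eqref{keyestimate} as given and makes the quantities $m(r)$, $\area(X\setminus(K)_r)$, $\length(\partial(K)_r)$ explicit. The special feature of the disk/punctured-disk case is Diller's result \cite{Dil95}, which lets one set $t=1$ in \eqref{keyestimate}, thereby removing the need for an a priori bound on $\norm{\Theta}_\op$ when choosing $t$. The explicit estimate for $m(r)$ still relies on the Harnack-type propagation of \cite[Thm.~4.4]{BarDil96} (Theorem~\ref{BD4.4}), together with Lemma~\ref{4.6} for the value at a base point in the thick part; your Schwarz--Pick chaining is morally in the same spirit as Theorem~\ref{BD4.4}, but by itself it is only half of the argument. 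The thick--thin decomposition, Gauss--Bonnet, and collar-lemma ingredients you name all appear in the paper, and your heuristic for the origin of $a_1$ and the double exponential is on the right track, but these pieces only become relevant once you have a lower bound on $1-\norm{\Theta\phi}_1$ of the form \eqref{keyestimate} to plug them into.
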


In order to prove the above theorem, consider a \blue{unit-norm} quadratic differential $\phi\in Q(Y)$ such that $\Theta\phi\neq 0$. In~\cite{BarDil96}, the authors estimate
\begin{equation*}
1-\norm{\Theta\phi}
\end{equation*}
as follows.
Let $K\subset \overline X$ be any compact set containing the set~$Z$ of zeroes of $\Theta\phi$ and the punctures of $X$, viz.\ $Z\cup P\subset K$, and such that $\partial K$ is smooth.
Further let
\begin{equation}\label{eq:DefMr}
m(r)\eqdef \min_{p\in\partial \enl K r}\av{\Theta\phi} \fstop
\end{equation}
Then, for every $t>1$ and every $r_0>0$, \blue{\cite[Lem.~3.2]{BarDil96} proves the following estimate}
\begin{equation}\label{keyestimate}
1-\norm{\Theta\phi}\geq \int_0^{r_0}m(r)\quadre{t^{-1}\area(X\setminus \enl K r)-\length(\partial \enl K r)}\diff r \fstop
\end{equation}
In general the $t$ in the above estimate will depend on the geometry and topology of the covering surface $Y$. \blue{In the case at hand however,~$Y$ is either the Poincar\'e disk or a punctured disk, and by work of J.~Diller \cite{Dil95}, we can assume that $t=1$.} It is likely that the constants of Diller can be made explicit as well and so that one could have a version of Theorem \ref{t:BD} were the constants are explicit in the topology of $X$, $Y$ and their injectivity radii.

In the following sections, we give effective estimates for $m(r)$, $\area(X\setminus \enl K r)$, and $\length(\partial \enl K r)$.
In order to estimate $m(r)$ we will need the following result from \cite{BarDil96}.

\begin{thm}[{\cite[Thm.~4.4]{BarDil96}}]\label{BD4.4}
\blue{
Let~$\psi\in Q(X)$ with zero set~$Z$.
Suppose $W\subset X\setminus Z$ is a domain such that $\av{\psi(p)} \leq \UBL$ for all $p\in W$, and set $\rho(p)\eqdef \min\set{1, \dist (p,\partial W)}$.
}
Then, if $\gamma\subset W$ is a path connecting $p_1$ and $p_2$ we have:
\begin{equation*}
\frac{\av{\psi(p_1)} }{ \av{\psi(p_2)} }\geq \left( \frac{ \av{\psi(p_2)} }{c_4\UBL} \right) ^{-1+\exp\left(\int_\gamma \frac{\diff s}{\tanh (\rho/2)}\right)} \fstop
\end{equation*}

\end{thm}

\section{Effective Computations}\label{explicitcomps}

The following is an easy lemma bounding the diameter of components of $\enl{X_\thick}{\eps}$ or $\enl{X_\core}{\eps}$.

\begin{lem} 
Let~\blue{$X\in \mathcal M(S_{g,n})$}. Then,
\begin{enumerate}[$(i)$]
\item\label{i:l:BD4.2:1} any pair of points in the same connected component of~$\enl{X_\thick}{\eps}$ is joined by a path of length at most~$4\abs{\chi}/\eps$;
\item\label{i:l:BD4.2:2} any pair of points in~$\enl{X_\core}{\eps}$ is joined by a path~$\gamma$ in~$\enl{X_\core}{\eps}$ satisfying
\begin{align}
\ell(\gamma)\leq 4\abs{\chi}^2/\eps+ 2\kappa\, \arcsinh\ttonde{\csch(\ell/2)} \fstop
\end{align}
\end{enumerate}
\begin{proof}
\blue{Assertion \iref{i:l:BD4.2:1} is a} consequence of the Bounded Diameter Lemma \cite{Th1978}.

\iref{i:l:BD4.2:2} Using the fact that each component of $\enl{X_\thick}{\eps}$ contains an essential pair of pants and that the maximal number of pairwise disjoint short curves is $\kappa$ we have:

\paragraph{Claim} $\card{\enl{X_\thick}{\eps}}\leq \abs{\chi}$ and~$\card{\enl{X_\thin}{\eps}}\leq \kappa$.

By short-cutting in the region we obtain:

\paragraph{Claim} A length-minimizing~$\gamma$ enters each~$U\in\pi_0\ttonde{\enl{X_\core}{\eps}}$, resp.~$U \in \pi_0\ttonde{\enl{X_\thin}{\eps}}$ at most once.

\medskip

Let~$\gamma$ be length-minimizing. By~\iref{i:l:BD4.2:1} we have~$\length(\gamma \cap U)\leq 4\abs{\chi}/\eps$.
By the Collar Lemma \cite{Bu1992},
\begin{align*}
\length(\gamma\cap U)\leq \diam (U)\leq 2\,\arcsinh\ttonde{\csch(\ell/2)} \fstop
\end{align*}

The conclusion follows combining the previous estimates with the two claims.
\end{proof}
\end{lem}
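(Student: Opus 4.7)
The plan is to treat (i) and (ii) in sequence, using (i) as a building block for (ii).

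For (i), I would appeal to Thurston's Bounded Diameter Lemma. On the thick part every point has injectivity radius at least the two-dimensional Margulis constant $\eps_0=\arcsinh(1)$, so the $\eps$-enlargement retains enough regularity that an area-packing argument applies: disjoint embedded $\eps/2$-balls placed along a length-minimizing path within a single component give a total area bounded below by a multiple of the path length, while Gauss--Bonnet caps the total area of $X$ by $2\pi|\chi|$. Rearranging yields path length at most $4|\chi|/\eps$.

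For (ii), the strategy is to build an efficient path in $\enl{X_\core}{\eps}$ by controlling separately how much time it spends in thick and in thin components. First I would establish the combinatorial counts $\card{\enl{X_\thick}{\eps}}\leq |\chi|$ and $\card{\enl{X_\thin}{\eps}}\leq \kappa$: each thick component contains an essential pair of pants, of which there are at most $|\chi|$ pairwise disjoint on $X$; each thin component is a collar around a distinct short geodesic, and there are at most $\kappa$ pairwise disjoint essential simple closed curves on $X$.

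Next, I would take a length-minimizing path $\gamma$ and show via short-cutting that $\gamma$ enters each connected component of $\enl{X_\thick}{\eps}$, respectively of $\enl{X_\thin}{\eps}$, at most once. Part (i) then bounds $\length(\gamma\cap U)\leq 4|\chi|/\eps$ for each thick visit, and the Collar Lemma of Buser (Lemma~\ref{l:Buser}) bounds the diameter of each thin component $U$ by $2\arcsinh(\csch(\ell/2))$, using that the core geodesic of $U$ has length at least $\ell$. Summing over at most $|\chi|$ thick and $\kappa$ thin visits yields the asserted bound. The step I expect to require the most care is the short-cutting claim: while geometrically natural, it needs that a minimizer in the closure of $\enl{X_\core}{\eps}$ can be homotoped, rel endpoints and within the enlarged core, to a path making a single pass through each connected piece. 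The annular topology of thin components and the peripheral topology of the core interact with the choice of enlargement parameter, but for finite-type $X$ everything is controlled once the combinatorial component counts are in place.
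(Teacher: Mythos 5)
Your proposal follows the paper's proof essentially step for step: Bounded Diameter Lemma for (i), the two combinatorial component counts, the short-cutting claim that a minimizer meets each thick/thin component at most once, the per-component length bounds via (i) and Buser's Collar Lemma, and summation. The one place you add content is the caveat about the short-cutting step, which is indeed stated without proof in the paper as well; your identification of it as the delicate point is accurate, but you do not supply a different argument there, so the two proofs are the same in substance.
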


\blue{The next Lemma is~\cite[Lem.~4.6]{BarDil96}.
We just work out the constant explicitly.
}
\begin{lem} \label{4.6}
Let~$\UBL(s)\eqdef\displaystyle\max_{p\in \enl{X_\thick}{s}} \av{\psi(p)}$. Then,
\begin{enumerate}[$(i)$]
\item\label{i:l:BD4.6:1} $\UBL(0)\geq \tfrac{\ell \wedge 1}{16\, \abs{\chi}}\norm{\psi}$;
\item\label{i:l:BD4.6:2} for all~$0\leq s\leq t$, we have~$\UBL(s)\geq e^{s-t}\UBL(t)$.
\end{enumerate}

\begin{proof}
\iref{i:l:BD4.6:1} Firstly assume that at most half the mass of $\psi$ is concentrated inside the collars of short geodesics. As in~\cite[Lem.~4.6(i)]{BarDil96}, it follows that
\begin{equation}\label{eq:l:BD4.6:1}
\av{\psi}\geq \frac{\norm{\psi}}{2\, \area(X)}=\frac{\norm{\psi}}{4\pi\abs{\chi}} \geq \frac{\norm{\psi}}{16\,\abs{\chi}} \fstop
\end{equation}

Assume now that at least half the mass of $\psi$ is concentrated inside collars of short geodesics.
Let~$\gamma$ be any such geodesic and let \blue{$\mcC\eqdef\mcC_\gamma$ be the collar around $\gamma$}.
For~$r\leq R\eqdef \pi^2/\ell(\gamma)$ and $r$ satisfying~$\tan\ttonde{\pi r/(2R)}=\csch\ttonde{\ell(\gamma)/2}$, we have that
\begin{align*}
\frac{1}{2\, \area(X)}\norm{\psi}\leq& \int_\mcC \abs{\psi} = \int_0^{2\pi} \int_{e^{-r}}^{e^r} \frac{\abs{f(z)}}{\abs{z}^2}r \diff r\diff\theta
\\
\leq& \int_0^{2\pi}\int_{e^{-r}}^{e^r} \UBL r^{-1} \diff r\diff\theta =4\pi \UBL r \comma
\intertext{hence that}
\frac{\norm{\psi}}{2\pi r\, \area(X) } \leq&\ 4 \UBL \fstop
\end{align*}

Computing both~$r$ and~$R$ in terms of~$\ell(\gamma)$,
\begin{align*}
\UBL(0)\geq&\ \max_{\partial\mcC} \av{\psi} = \frac{4\UBL R^2}{\pi^2}\cos^2 \frac{\pi r}{2R }
\\
\geq&\ \frac{\norm{\psi}}{2\, \area(X)} \frac{R^2}{2R\arctan\tonde{\csch\ttonde{\ell(\gamma)/2}}} \cos^2\tonde{\arctan\tonde{\csch\ttonde{\ell(\gamma)/2}}} \fstop
\intertext{Now, since~$\cos^2\tonde{\arctan\tonde{\csch(t)}}=\tanh^2(t)$, and substituting~$R\eqdef 2\pi/\ell(\gamma)$,}
\UBL(0)\geq &\ \frac{\norm{\psi}}{4\, \area(X)} \frac{R\, \tanh^2\ttonde{\ell(\gamma)/2}}{\arctan\tonde{\csch\ttonde{\ell(\gamma)/2}}}
\\
=&\ \frac{\pi^2\norm{\psi}}{4\, \area(X)} \frac{\tanh^2\ttonde{\ell(\gamma)/2}}{\ell(\gamma)^2\cdot\arctan\ttonde{\csch(\ell(\gamma)/2)}} \cdot \ell(\gamma) \fstop
\intertext{Since~$t\mapsto \tanh^2(t/2)/\ttonde{t^2\arctan(\csch(t/2))}$ has global minimum~$\tfrac{1}{2\pi}$ at~$t=0$, we have that}
\UBL(0)\geq&\ \frac{\pi\, \ell(\gamma)}{8\, \area(X)} \norm{\psi} \geq \frac{\ell}{16\abs{\chi}} \norm{\psi}\fstop
\end{align*}

Combining the above inequality with~\eqref{eq:l:BD4.6:1} yields the assertion.

\iref{i:l:BD4.6:2} is~\cite[Lem.~4.6]{BarDil96}.
\end{proof}
\end{lem}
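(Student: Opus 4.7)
The proof naturally splits according to the two parts. For part~\iref{i:l:BD4.6:1}, the plan is to dichotomize based on where the mass of $\psi$ lies: either at most half of~$\norm{\psi}$ is concentrated in the collars of short geodesics, or more than half is. In the first case, since the density of any integrable holomorphic quadratic differential vanishes at each cusp, the complement of the collars behaves for present purposes like $X_\thick$ itself; a pigeonhole argument together with Gauss--Bonnet, $\area(X)=2\pi\abs{\chi}$, then yields some $p\in X_\thick$ with $\av{\psi(p)}\geq \norm{\psi}/(4\pi\abs{\chi})\geq \norm{\psi}/(16\abs{\chi})$, which is already stronger than needed (and even independent of~$\ell$).

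In the second case, I would focus on a single collar $\mcC_\gamma$ carrying a definite fraction of the mass. Using Lemma~\ref{l:Buser} and the standard conformal model of $\mcC_\gamma$ as an annulus $\set{e^{-r}<\abs{z}<e^r}$ (with $r$ an explicit function of $\ell(\gamma)$), write $\psi = f(z)\,dz^2$ with $f$ holomorphic, so that the density with respect to hyperbolic area reads $\abs{f(z)}/\lambda(z)^2$, where $\lambda$ is the hyperbolic conformal factor. Polar-coordinate integration then bounds $\int_{\mcC_\gamma}\abs{\psi}$ above by the maximum $M$ of the density over $\mcC_\gamma$ times an explicit function of $r$. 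Evaluating on the critical circle and transferring the bound to $\partial\mcC_\gamma\subset X_\thick$ via a boundary-maximum estimate on the annulus yields $M(0) \gtrsim \ell(\gamma)\,\norm{\psi}/\area(X)$; an elementary minimization of $\tanh^2(t/2)/\bigl(t^2\,\arctan(\csch(t/2))\bigr)$, whose infimum $1/(2\pi)$ is attained at $t=0$, absorbs the $\ell(\gamma)$-dependence into the global~$\ell$ and delivers the advertised $\frac{\ell}{16\abs{\chi}}$ constant.

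For part~\iref{i:l:BD4.6:2}, I would invoke the standard pointwise comparison that the density of a holomorphic quadratic differential is exponentially Lipschitz in hyperbolic distance, $\av{\psi(p)}\leq e^{d(p,q)}\av{\psi(q)}$ (a variant of Theorem~\ref{BD4.4} in large domains avoiding the zero set of~$\psi$). Given $p\in\enl{X_\thick}{t}$ realizing $M(t)$, pick $q$ along the geodesic from $p$ to its nearest point in $X_\thick$ so that $q\in\enl{X_\thick}{s}$ and $d(p,q)\leq t-s$; the pointwise bound gives $M(t)\leq e^{t-s}\,M(s)$, equivalent to the claim. The main obstacle is the collar computation in part~\iref{i:l:BD4.6:1}: one must convert an annular integral bound into a pointwise estimate on $\partial\mcC_\gamma$ with the correct explicit $\ell(\gamma)$-dependence, the remaining pieces being routine once this set-up is in place.
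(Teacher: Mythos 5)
Your treatment of part~\iref{i:l:BD4.6:1} follows the paper's argument essentially step for step: the same dichotomy (mass mostly outside collars vs.\ mostly inside a collar), the same Gauss--Bonnet normalization $\area(X)=2\pi\abs{\chi}$ in the first case, the same polar-coordinate estimate on the conformal model of the collar and the same elementary minimization of $t\mapsto \tanh^2(t/2)/\ttonde{t^2\arctan(\csch(t/2))}$ in the second case. No issues there.

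For part~\iref{i:l:BD4.6:2} the paper simply cites \cite[Lem.~4.6(ii)]{BarDil96}, while you sketch a self-contained argument. The route you take is plausible in spirit, but there is a genuine gap as written. Your key step is a pointwise estimate $\av{\psi(p)}\leq e^{d(p,q)}\av{\psi(q)}$, applied with $q$ chosen on the geodesic from $p$ to its nearest point of~$X_\thick$. This inequality cannot hold for an arbitrary such pair: $\av{\psi}$ vanishes at the zeros of~$\psi$, so if that geodesic passes near a zero of~$\psi$, the right-hand side may be arbitrarily small while the left-hand side is not. (Equivalently, Theorem~\ref{BD4.4} gives a ratio bound whose exponent blows up precisely when $\rho\to 0$, i.e.\ near~$\partial W$ which must exclude the zero set — it is not a plain exponential-Lipschitz bound with unit constant.) To salvage the argument you would either need to show that the chosen $q$ can be taken to avoid the zeros (and control the injectivity radius along the way so that the constant in the Harnack-type estimate is uniform), or instead appeal to the actual mechanism in~\cite{BarDil96}, which exploits the superharmonicity of $\log\av{\psi}$ (the hyperbolic Laplacian of $\log\av{\psi}$ equals $-2$ away from zeros) to propagate maxima between $\enl{X_\thick}{s}$ and $\enl{X_\thick}{t}$. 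As it stands, the claimed pointwise bound is false without those qualifications, so the proof of~\iref{i:l:BD4.6:2} does not yet go through.
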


Let $\log_+(x)\eqdef \max\set{0,\log(x)}$.
We start with some estimates towards establishing \eqref{keyestimate}.

\begin{lem}\label{l:BD5.1}
For each connected component~$U\in \pi_0\ttonde{\enl{X_\thick}{s}}$, letting~$s=\log_+(c_1 t)$
\begin{enumerate}[$(i)$]
\item\label{i:l:BD5.1:1} $\area(U) - t\, \length(\partial U)\geq \pi/3$;
\item\label{i:l:BD5.1:2} for all $p\in U$: $\inj_p\geq c_2/t$;
\item\label{i:l:BD5.1:3} given $p_1,p_2\in U$ there exists~$\gamma\subset U$ connecting~$p_1$ and~$p_2$ such that
\begin{equation*}
\ell(\gamma)\leq \frac{4\abs{\chi}^2} \epsilon +2\kappa \log t +2\kappa \log c_1\fstop
\end{equation*}
\end{enumerate}

\begin{proof}
\iref{i:l:BD5.1:1} Let~$g_U$ and~$n_U$ respectively denote the genus of $U$ and the number of boundary components of~$U$.
Further let~$A_1,\dotsc, A_{n_U}$ denote the embedded annuli bounded by short closed geodesics on one side and by connected components of~$\partial U$ on the other side. We allow for~$A_j$ being part of a cusp, in which case, on one side, it is bounded by a puncture rather than by a short geodesic.

By the Gauss--Bonnet Theorem,
\begin{align*}
\area(U)=2\pi(2g_U + n_U-2)-\sum \area(A_j) \fstop
\end{align*}

If~$n_U=0$ then $U=X$, which yields~$\area(U)-t\, \length(\partial U)=2\pi\abs{\chi}$.
Thus, in the following we may assume without loss of generality that~$n_U\geq 1$.
In this case, either~$g_U\geq 1$ and $n_U\geq 1$, or~$g_U=0$ and~$n_U\geq 3$. Thus,
\begin{align*}
\area(U)\geq 2\pi\, \frac{n_U}{3} -\sum_j\area(A_j) \fstop
\end{align*} 

Let~$\ell_j$ denote the length of the geodesic component of~$\partial A_j$ and $L_j$ denote the length of the other component. Then,
\begin{align*}
\area(U)-t\, \length(\partial U) \geq 2\pi\, \frac{n_U}{3} + \sum_j\ttonde{(t-1)\area(A_j)-t(\area(A_j)+L_j)}\fstop
\end{align*}

By Lemma~\ref{l:Buser}, setting
\begin{equation}\label{eq:wAnnuli}
w_j\eqdef \arcsinh\left(\frac 1{\sinh(\ell_j/2) }\right)\comma
\end{equation}
we have that
\begin{equation*}
\area(A_j)=\int_0^{w_j-s} \int_0^1\ell_j\cosh(\rho)\diff\rho\diff t=\ell_j\sinh(w_j-s)
\end{equation*}
and
\begin{equation*}
L_j=\ell_j\cosh(w_j-s)\fstop
\end{equation*}
We see that 
\begin{align*}
\area(A_j)+L_j=\ell_j \ttonde{\sinh(w_j-s)+\cosh(w_j)}=\frac{e^{-s}\ell_j}{\tanh(\ell_j/4)}
\end{align*}
is monotone increasing in~$\ell_j$ (e.g.\ by differentiating w.r.t.\ $\ell_j$). Thus it achieves its minimum when the two boundary components of~$A_j$ coincide, in which case~$\ell_j=A_j$ and~$\area(A_j)=0$.
In this case, $s$~measures the distance from the geodesic to the edge of the collar containing~$A_j$. Therefore, by the Collar Lemma, $\sin(\ell_j/2)=\csch(s)$, hence
\begin{align*}
\area(U)-t\, \length(\partial U)\geq&\ 2n_U\tonde{\pi/3 - t\, \arcsinh\ttonde{\csch(s)}} 
\\
\geq&\ 2\tonde{\pi/3 - t\, \arcsinh\ttonde{\csch(s)}}
\fstop
\end{align*}

Letting the right-hand side above be larger than~$\pi/3$ we get
\begin{align*}
s\geq \arcsinh\ttonde{\csch(\pi/(6t))}\comma \qquad t>1\comma \qquad  s=\log\tonde{\coth\ttonde{\tfrac{\pi}{12}} t}\fstop
\end{align*}

\iref{i:l:BD5.1:2} Let~$\mcC$ be a short collar in~$X$. For~$p\in \enl{X_\thick}{\eps+s}\cap \mcC$, by the Collar Lemma we have that
\begin{align*}
\inj_p&\geq \arcsinh\left( e^{\dist(p,\partial\mcC)}\right)=\arcsinh\left(e^{-s}\right)=\arcsinh\left(\tfrac 1 {c_1 t}\right)\geq \frac {c_2}{t}
\end{align*}
with $c_2\eqdef \arcsinh(1/c_1)$, and where the last inequality is sharp by a direct computation.

\iref{i:l:BD5.1:3}
Let~$p_1$,~$p_2\in U$.
Then we can find a rectifiable curve~$\gamma$, connecting~$p_1$ to~$p_2$, and enjoying the following properties:
\begin{enumerate}[$(a)$]
\item if~$\gamma\cap \mcC\neq \emp$, then~$\gamma\cap \partial\mcC$ consists of two points belonging to distinct connected components of~$\partial\mcC$, and~$\length(\gamma\restr_\mcC)\leq 2s$;
\item in each connected component of~$\enl{X_\thick}{\eps}$, the curve~$\gamma$ is a shortest path between its endpoints.
\end{enumerate}
See Fig.~\ref{Fig1} below.

\begin{center}\begin{figure}[htb!]
	 										\centering
	 										\def\svgwidth{350pt}
\begingroup%
  \makeatletter%
  \providecommand\color[2][]{%
    \errmessage{(Inkscape) Color is used for the text in Inkscape, but the package 'color.sty' is not loaded}%
    \renewcommand\color[2][]{}%
  }%
  \providecommand\transparent[1]{%
    \errmessage{(Inkscape) Transparency is used (non-zero) for the text in Inkscape, but the package 'transparent.sty' is not loaded}%
    \renewcommand\transparent[1]{}%
  }%
  \providecommand\rotatebox[2]{#2}%
  \newcommand*\fsize{\dimexpr\f@size pt\relax}%
  \newcommand*\lineheight[1]{\fontsize{\fsize}{#1\fsize}\selectfont}%
  \ifx\svgwidth\undefined%
    \setlength{\unitlength}{282.83722164bp}%
    \ifx\svgscale\undefined%
      \relax%
    \else%
      \setlength{\unitlength}{\unitlength * \real{\svgscale}}%
    \fi%
  \else%
    \setlength{\unitlength}{\svgwidth}%
  \fi%
  \global\let\svgwidth\undefined%
  \global\let\svgscale\undefined%
  \makeatother%
  \begin{picture}(1,0.52184507)%
    \lineheight{1}%
    \setlength\tabcolsep{0pt}%
    \put(0,0){\includegraphics[width=\unitlength,page=1]{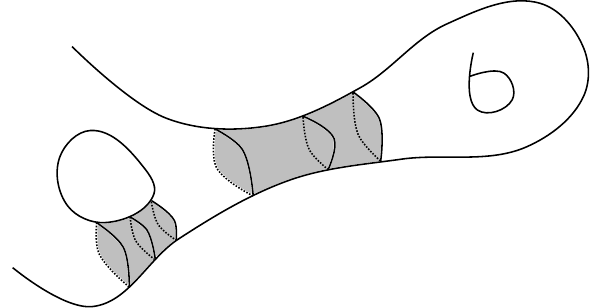}}%
    \put(0.20185476,0.16163252){\color[rgb]{0,0,0}\makebox(0,0)[lt]{\lineheight{1.25}\smash{\begin{tabular}[t]{l}$\gamma_2$\end{tabular}}}}%
    \put(0.50718487,0.33405084){\color[rgb]{0,0,0}\makebox(0,0)[lt]{\lineheight{1.25}\smash{\begin{tabular}[t]{l}$\gamma_1$\end{tabular}}}}%
    \put(0.2706125,0.0694875){\color[rgb]{0,0,0}\makebox(0,0)[lt]{\lineheight{1.25}\smash{\begin{tabular}[t]{l}$\mathcal C$\end{tabular}}}}%
    \put(0.55051817,0.20001816){\color[rgb]{0,0,0}\makebox(0,0)[lt]{\lineheight{1.25}\smash{\begin{tabular}[t]{l}$\mathcal C$\end{tabular}}}}%
    \put(0,0){\includegraphics[width=\unitlength,page=2]{figure1.pdf}}%
    \put(0.90012132,0.48770669){\color[rgb]{0,0,0}\makebox(0,0)[lt]{\lineheight{1.25}\smash{\begin{tabular}[t]{l}$p_2$\end{tabular}}}}%
    \put(-0.00185357,0.39043239){\color[rgb]{0,0,0}\makebox(0,0)[lt]{\lineheight{1.25}\smash{\begin{tabular}[t]{l}$p_1$\end{tabular}}}}%
    \put(0,0){\includegraphics[width=\unitlength,page=3]{figure1.pdf}}%
  \end{picture}%
\endgroup%

											\caption{The piecewise geodesic curve~$\gamma$ connecting~$p_1$ to~$p_2$ and~$\mathcal C$, shaded, are collars around short geodesics. }\label{Fig1}
\end{figure}\end{center}

We can decompose~$\gamma$ into its components in
\begin{align*}
X_1\eqdef \enl{X_\thick}{\eps} \qquad \text{and} \qquad X_2\eqdef \overline{\enl{X_\thick}{\eps+s}\setminus \enl{X_\thick}{\eps}}\subset \enl{X_\thin}{\eps}\fstop
\end{align*}
By the Bounded Diameter Lemma \cite{Th1978}, the length of each component of~$\gamma$ in~$X_1$ is bounded by~$4\abs\chi/\epsilon$, and we have at most~$\abs\chi$ such components.
In each connected component of~$X_2\subset \enl{X_\thin}{\eps}$ the length of~$\gamma$ is at most~$2s$, and there are at most~$\kappa$ such components.
Thus, for $s=\log(c_1t)$ we get
\begin{equation*}
\ell(\gamma)\leq \frac{4\abs\chi ^2}\epsilon+2s\kappa=\frac{4\abs\chi ^2}\epsilon+2\kappa \log(c_1t) \fstop \qedhere
\end{equation*}

\end{proof}
\end{lem}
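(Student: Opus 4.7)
The plan is to treat the three assertions separately, unified by the geometric observation that $\enl{X_\thick}{s}$ is obtained by thickening the thick part by $s$ into the surrounding collars and cusps, and that the choice $s=\log_+(c_1 t)$ is tuned so that the extra area gained by this thickening dominates the $t$-weighted boundary it exposes.

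For (i), my first move is Gauss--Bonnet: writing $g_U,n_U$ for the genus and number of boundary components of $U$, and $A_1,\dots,A_{n_U}$ for the thin annuli between each short geodesic (or cusp) and the corresponding component of $\partial U$, one has $\area(U)=2\pi(2g_U+n_U-2)-\sum_j\area(A_j)$. The degenerate case $n_U=0$ gives $U=X$ immediately; otherwise topology forces $2g_U+n_U-2\geq n_U/3$, and the main task reduces to estimating from below the sum of terms $(t-1)\area(A_j)-t L_j$, where $L_j$ is the length of the outer boundary of $A_j$. Parametrizing each collar via Lemma~\ref{l:Buser} with half-width $w_j$ and effective width $w_j-s$, a direct trigonometric computation gives $\area(A_j)=\ell_j\sinh(w_j-s)$ and $L_j=\ell_j\cosh(w_j-s)$, whence $\area(A_j)+L_j=e^{-s}\ell_j/\tanh(\ell_j/4)$. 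Monotonicity of this quantity in $\ell_j$ should show the worst case is the degenerate one in which the two components of $\partial A_j$ collapse to a single geodesic, forcing $\sinh(\ell_j/2)=\csch(s)$ via the Collar Lemma. Requiring $\pi/3\geq t\arcsinh(\csch(s))$ in this worst case then recovers the definition $s=\log(\coth(\pi/12)\,t)=\log(c_1 t)$.

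For (ii), the Collar Lemma directly gives $\inj_p\geq\arcsinh(e^{\dist(p,\partial\mcC)})$ on any collar $\mcC$ containing $p$; since $p\in\enl{X_\thick}{\eps+s}$ sits at distance at most $s$ from $\partial\mcC$, substituting yields $\inj_p\geq\arcsinh(e^{-s})=\arcsinh(1/(c_1 t))$. The monotonicity of $x\mapsto x\arcsinh(1/x)$ then converts this into the displayed lower bound $c_2/t$ with $c_2\eqdef\arcsinh(1/c_1)$.

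For (iii), I split $U$ into the inner thickening $X_1\eqdef\enl{X_\thick}{\eps}$ and the outer ring $X_2\eqdef\overline{\enl{X_\thick}{\eps+s}\setminus\enl{X_\thick}{\eps}}\subset\enl{X_\thin}{\eps}$, and build $\gamma$ piecewise. In each component of $X_1$ the Bounded Diameter Lemma~\cite{Th1978} supplies a segment of length at most $4\abs\chi/\eps$, and there are at most $\abs\chi$ such components. Within each component of $X_2$ a radial segment of length at most $2s$ suffices, and the number of such components is bounded by $\kappa$, the maximal number of pairwise disjoint short geodesics. Summing and substituting $s=\log(c_1 t)$ yields the stated bound. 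The main obstacle across the whole argument is part (i): the delicate step is verifying monotonicity of $\area(A_j)+L_j$ in $\ell_j$, so that the degenerate configuration really realizes the minimum; once that is in place, parts (ii) and (iii) follow by essentially direct applications of the Collar and Bounded Diameter Lemmas.
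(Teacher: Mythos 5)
Your argument follows the paper's proof essentially verbatim in all three parts: Gauss--Bonnet plus the collar-annulus computation for (i), the Collar Lemma and monotonicity of $x\mapsto x\arcsinh(1/x)$ for (ii), and the $X_1/X_2$ decomposition with the Bounded Diameter Lemma for (iii). One small arithmetic slip in (i): the requirement is $2\bigl(\pi/3 - t\arcsinh(\csch(s))\bigr)\geq \pi/3$, i.e.\ $t\arcsinh(\csch(s))\leq \pi/6$ rather than $\pi/3$, and it is this $\pi/6$ (via the identity $\arcsinh(\csch u)=\log\coth(u/2)$) that produces the $\pi/12$ in $c_1=\coth(\pi/12)$; with your stated threshold you would instead obtain $\coth(\pi/6)$, so the claim that it ``recovers the definition'' of $s$ does not quite go through, though the larger value of $c_1$ still satisfies the (weaker) constraint you wrote.
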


\blue{We now show how to estimate the quantities related to~$\enl{K}{r}$ in Equation~\eqref{keyestimate}.
Let $Z$ be the zeroes of a given quadratic differential $\psi$.
}

\begin{lem}\label{areaest}
Let~$U\in \pi_0\ttonde{\enl{X_\thick}{s}}$ and~$K\eqdef \overline{X\setminus U}\cup Z$. Then, for~$r\in (0,1)$, $t>1$, and~$s=\log_+(c_1 t)$
\begin{align*}
\area\ttonde{X\setminus \enl{K}{r}}-t\, \length\ttonde{\partial\enl{K}{r}}\geq&\ \pi/3 -\kappa rt\quadre{c_7- s+ 4\pi \ttonde{1+\sinh(1)} \frac{\abs{\chi}}{\kappa}} 
\\
\geq&\ \pi/3 -\kappa r t\quadre{4\pi \ttonde{1+\sinh(1)}+c_7 }
\\
=&\ \pi/3 -\kappa r t (c_5+c_7) 
\fstop
\end{align*}

\begin{proof}
Since~$\card{Z}\leq 2\abs{\chi}$ and~$r<1$, we have that
\begin{align}
\nonumber
\length\ttonde{\partial\enl{K}{r}}\leq&\ \length(\partial U)+\length\ttonde{\partial \enl{Z}{r}} \leq \length(\partial U) + 2\pi\card{Z} \sinh(r)
\\
\label{eq:l:Extra:1}
\leq&\ \length(\partial U) + 4\pi\abs{\chi} \sinh(1) \, r \fstop
\end{align}

Furthermore,
\begin{align*}
\area\ttonde{X\setminus \enl{K}{r}}=&\ \area(X)-\area\ttonde{\enl{K}{r}}
\\
\geq&\ \area(X)-\tonde{\area\ttonde{\overline{X\setminus U}}+\area\ttonde{\enl{\partial U^+}{r}}+\area\ttonde{\enl{Z}{r}}}
\\
\geq&\ \area(U)-\area\ttonde{\enl{\partial U^+}{r}}-4\pi\abs{\chi}\ttonde{\cosh(r)-1}
\\
\geq&\ \area(U)-\area\ttonde{\enl{\partial U^+}{r}} -4\pi\abs{\chi}r
\\
\geq &\ \area(U) - t\, \area\ttonde{\enl{\partial U^+}{r}} -4\pi\abs{\chi} t r
\end{align*}
since~$t>1$.
We can estimate~$\area\ttonde{\enl{\partial U^+}{r}}$ by assuming that~$\enl{\partial U^+}{r}$ is isometrically embedded, so that, by Lemma~\ref{l:Buser},
\begin{align*}
\area\ttonde{\enl{\partial U^+}{r}}=\sum_j\ell(\gamma_j)\ttonde{\sinh(w_j-s+r)-\sinh(w_j-s)}\fstop
\end{align*}

Repeat the construction of the annuli~$A_j$ in Lemma~\ref{l:BD5.1}, and let~$w_j$ be defined as in~\eqref{eq:wAnnuli}. 
By Taylor expansion of~$\sinh$ around~$w_j-s>0$, \blue{we have that}
\begin{align*}
\area(U) - t\, \area\ttonde{\enl{\partial U^+}{r}}\geq&\ \area(U)-r t\sum_j\ell(\gamma_j)(w_j-s)\\
\geq&\ \area(U)- r t \sum_j\, \ell(\gamma_j)\, \arcsinh\ttonde{\csch(\ell(\gamma_j)/2)}
\\
&+r t\,\log(c_1 t)\sum_j \ell(\gamma_j)
\\
\geq&\ \area(U)- c_7 \kappa rt+r t\, \log(c_1 t) \sum_j \ell(\gamma_j)\fstop
\end{align*}
As a function of the metric, the summation $\sum_j\ell(\gamma_j)$ attains its maximum over the \blue{moduli space~$\mathcal M(S_{g,n})$} when~$\ell(\gamma_j)=\eps_0$ for each~$j$, thus its maximum is~$\kappa\epsilon_0$. Therefore,
\begin{align}\label{eq:l:Extra:2}
\area\ttonde{X\setminus \enl{K}{r}}\geq \area(U)-r t \kappa\, c_7+rt\kappa \epsilon_0\, \log(c_1 t)-4\pi\abs{\chi} t r
\end{align}

Multiplying~\eqref{eq:l:Extra:1} by~$-t$ and adding~\eqref{eq:l:Extra:2}, together with Lemma~\ref{l:BD5.1}\iref{i:l:BD5.1:1},  yields the conclusion.
\end{proof}
\end{lem}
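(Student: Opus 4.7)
The plan is to split $K=\overline{X\setminus U}\cup Z$ into its two pieces, separately estimate how each enlarges under the $r$-operation, and then recover the headline $\pi/3$ term from Lemma~\ref{l:BD5.1}\iref{i:l:BD5.1:1}. The guiding set-theoretic inclusions are $\enl{K}{r}\subset \overline{X\setminus U}\cup \enl{\partial U^+}{r}\cup\enl{Z}{r}$ for areas, and $\partial\enl{K}{r}\subset \partial U\cup \partial \enl{Z}{r}$ for lengths, where $\enl{\partial U^+}{r}$ denotes the $r$-enlargement of $\partial U$ taken inside $U$.

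For the length, the new contribution beyond $\partial U$ comes only from (at most $\card{Z}$) hyperbolic circles of radius $r$ around the zeros of $\psi$; the circumference formula and the monotonicity $\sinh(r)\le \sinh(1)\,r$ on $(0,1)$ yield $\length(\partial \enl{Z}{r})\le 2\pi\card{Z}\sinh(r)\le 4\pi\abs{\chi}\sinh(1)\,r$, using the standard order count $\card{Z}\le 2\abs{\chi}$ for a quadratic differential in $Q(X)$.

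For the area, the zero contribution is harmless: $\area(\enl{Z}{r})\le 2\pi\card{Z}(\cosh(r)-1)\le 4\pi\abs{\chi}\,r$, via $\cosh(r)-1\le r$ on $(0,1)$. The collar contribution is the substantial piece: by Lemma~\ref{l:Buser}, each component of $\partial U$ in $\mcC_{\gamma_j}$ is a parallel curve at $\abs{\rho}=w_j-s$ with $w_j=\arcsinh(\csch(\ell_j/2))$, so integrating the warped metric over the annular slice $\abs{\rho}\in [w_j-s, w_j-s+r]$ gives $\area(\enl{\partial U^+}{r})=\sum_j \ell_j\ttonde{\sinh(w_j-s+r)-\sinh(w_j-s)}$. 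A first-order Taylor expansion of $\sinh$ around $w_j-s$ replaces the difference by the $r$-linear bound $r(w_j-s)$, and harmlessly inserting the factor $t>1$ yields $t\,\area(\enl{\partial U^+}{r})\le rt\sum_j \ell_j(w_j-s)$.

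To close, I substitute $w_j=\arcsinh(\csch(\ell_j/2))$ and $s=\log(c_1 t)$, bound $\sum_j \ell_j \arcsinh(\csch(\ell_j/2))\le \kappa\, c_7$ via the definition of $c_7$, and invoke the Teichm\"uller-space maximum $\sum_j \ell_j\le \kappa\epsilon_0\le \kappa$ to absorb the sign-positive term $+rts\sum_j\ell_j$. Combining with the bound $\area(U)-t\length(\partial U)\ge \pi/3$ from Lemma~\ref{l:BD5.1}\iref{i:l:BD5.1:1} assembles the first displayed inequality; the subsequent weakenings via $s\ge 0$, $\abs{\chi}/\kappa\le 1$, and the definition $c_5=4\pi(1+\sinh(1))$ yield the second and third. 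The main obstacle is the Taylor control $\sinh(w_j-s+r)-\sinh(w_j-s)\le r(w_j-s)$, which is delicate when $w_j-s$ is small and depends on the precise choice $s=\log(c_1 t)$ with $c_1=\coth(\pi/12)>1$ and $t>1$; a secondary bookkeeping issue is the cusp components of $\partial U$, for which the cusp-collar computation gives an analogous, smaller contribution absorbable into the same $c_7$ bound.
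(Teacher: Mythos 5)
Your proposal follows the paper's proof essentially step for step: the same splitting of $\length\ttonde{\partial\enl{K}{r}}$ into the $\partial U$ and $\partial\enl{Z}{r}$ contributions with the bound $\card{Z}\le 2\abs{\chi}$, the same area subtraction and warped-product formula from Lemma~\ref{l:Buser} for $\area\ttonde{\enl{\partial U^+}{r}}$, the same use of $c_7$ to absorb $\sum_j\ell_j\arcsinh\ttonde{\csch(\ell_j/2)}$, and the same final appeal to Lemma~\ref{l:BD5.1}\iref{i:l:BD5.1:1}. The concern you flag about replacing $\sinh(w_j-s+r)-\sinh(w_j-s)$ by $r(w_j-s)$ is legitimate (the mean-value theorem gives $r\cosh(\xi)\ge r$, which exceeds $r(w_j-s)$ whenever $w_j-s<1$), but the paper's own write-up offers no further justification than the phrase ``Taylor expansion,'' so this is not an omission on your part relative to the source.
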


Let~$U$ be the component of~$\enl{X_\thick}{\eps+s}$ containing~$p_{\max}(s)$, where~$s=\log(c_1 t)$ and~$p_{\max}$ satisfies Lemma~\ref{4.6}.
Set~$K'\eqdef \overline{X\setminus U}$ and let~$K\eqdef K'\cup Z$. \blue{This is a slight refinement of the previous $K$, in which we chose a specific component $U$ and a slightly larger neighbourhood of $U$. The next Lemma will deal with paths in $X\setminus \enl K r$. When, $r=0$ $X\setminus K=U\setminus Z$ which looks as Fig.~\ref{Fig3} below.}

\begin{center}\begin{figure}[htb!]
\centering
\def\svgwidth{250pt}
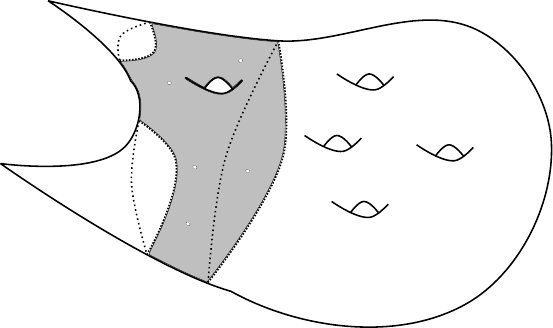
\caption{The set $X\setminus K=\text{int}(U)\setminus Z$ is greyed out and the white points are zeroes of the quadratic differential.}\label{Fig3}
\end{figure}\end{center}

\begin{lem}\label{5.2}
Fix~$t>1$. If~$r< c_2 /(\abs{\chi}t)$, then any two points in $X\setminus \enl{K}{r}$ can be joined by a rectifiable curve in $X\setminus \enl{K}{r/2}$.

\begin{proof}
\blue{We start with the following claim.}

\paragraph{Claim}
Let~$V \in \pi_0\ttonde{\enl{K}{r/2}}$. If~$V\cap\enl{K'}{r/2}\neq \emp$, then~$V\subset \enl{K'}{r}$. 

Indeed, for~$c>0$ to be fixed later, let~$V\in \pi_0\ttonde{\enl{K}{c r}}$ with~$V\cap\enl{K'}{c r}\neq \emp$. We need to show that if $V$ is such  component it does not separate $X\setminus \enl K r $. Fix~$p\in V\setminus \enl{K'}{c r}$.
Since~$V$ is connected and contained in $\enl {K}{cr}$, then~$p$ is joined to~$\enl{K'}{c r}$ by a chain of disks of radius~$c r$ centered at points in~$Z$. 
Therefore~$\dist(p, \enl{K'}{cr})\leq 2 c \card{Z} r$. 
Choosing~$c<(2\card{Z}+1)^{-1}$, e.g.~ $c\eqdef \tfrac{1}{2}(2\card{Z}+1)^{-1}$,  proves that $\dist(p, \enl{K'}{cr})\leq r/2$ and so that:
\begin{align*} \dist(p, K')&\leq \dist(p, \enl{K'}{cr})+cr=2c\card Zr +cr\leq (2\card Z+1)cr\leq r/2\comma
\end{align*}
proving that~$V\subset \enl{K'}{r}$. \blue{This concludes the proof of the claim.}

\medskip

Thus, we need to show that for $r< c_2/t$ and for all $p_0,p_1\in X\setminus \enl{K}{r}\subset \enl{U}{s}$ there exists a rectifiable curve~$\gamma\subset X\setminus \enl{K}{c_2 r}$ connecting~$p_0$ to~$p_1$. By the Collar Lemma,
\begin{equation*}
\inj_{\enl{U}{s}}\eqdef \min_{p\in \enl{U}{s}} \inj_p\geq \arcsinh(e^{-s})=\arcsinh\tonde{\frac{1}{c_1t}}\geq \frac{c_2}{t}\comma
\end{equation*}
similarly to the proof of Lemma~\ref{l:BD5.1}\iref{i:l:BD5.1:2}.

Now, argue by contradiction \blue{and assume} that there exists no rectifiable curve as in the assertion.
Then, there exists a rectifiable loop~$\alpha$ in~$\enl{Z}{r/2}$ separating~$X\setminus \enl{K}{r}\subset \enl{U}{s}$ into connected components so that~$p_0$ and~$p_1$ belong to two distinct such components. See the picture in Fig.~\ref{Fig2} below.
\begin{center}\begin{figure}[htb!]
\centering
\def\svgwidth{350pt}
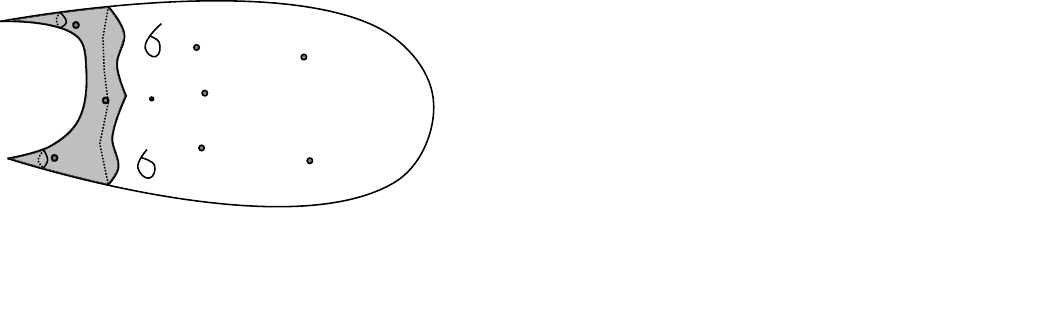
\caption{The two cases for the loop~$\alpha$ separating~$p_1$ to~$p_2$. The shaded regions are part of $\enl{K}{r/2}$ and the grey dots are zeroes of the quadratic differential.}\label{Fig2}
\end{figure}\end{center}

For any such~$\alpha$,
\begin{equation*}
\length(\alpha)\leq r \card{Z}<\card{Z}\frac {c_2}{\abs \chi t}\leq \frac {c_2} t\leq \inj_{\enl{U}{s}}\fstop
\end{equation*}
As a consequence,~$\alpha\subset \enl{U}{s}$ is null-homotopic and so we must be \blue{on the right side} of Fig.~\ref{Fig2}. Therefore, there exists $L\in \R^+$ such that $\alpha\subset B_L(q)$ for $q\in (U)_s$ and $L\leq \ell(\alpha)/2< r/2$. Thus, the component $W\subset X\setminus \enl{K}{r}$ containing, say, $p_1$, lies in $B_L(q)\subset B_r(q)$ and note that by construction its distance from any zero is at least~$r$.
Therefore, $W$ is at distance $r/2+L< r$ from a zero. However, since $d(W,Z)\geq r$ we have a contradiction.
\end{proof}
\end{lem}

We now state the main Lemma we will use in our estimate of \eqref{keyestimate}.

\begin{lem}\label{5.3}
Let~$r<c_2/(\abs\chi t)$, and set~$a_1\eqdef 4\abs{\chi}^2/\epsilon +2\kappa \log c_1+2\, c_2\, c_3$.
Then, any two points in $\overline{X\setminus \enl{K}{r}}$ are joined by a rectifiable curve~$\gamma\subset \overline{X\setminus \enl{K}{r/2}}$ with the following properties:
\begin{enumerate}[$(i)$]
\item\label{i:l:BD5.3:1} $\gamma$ consists of length-minimising geodesic segments and at most one arc in each of the components of $\partial \enl{K}{r/2}$;
\item\label{i:l:BD5.3:2} $\ell(\gamma)\leq a_1+ 2\kappa \log t$;
\item\label{i:l:BD5.3:3} for $z\in Z$: $\length\ttonde{\gamma\cap B_w(z)}\leq 2(1+c_3) w$ for all $w>0$ such that~$B_w(z)$ is embedded.
\end{enumerate}

\begin{proof}
\iref{i:l:BD5.3:1}--\iref{i:l:BD5.3:2} Fix points~$p_0,p_1\in \overline{X\setminus \enl{K}{r}}$.
By Lemma \ref{l:BD5.1} there exists a rectifiable~$\gamma\subset U$ connecting them, with
\begin{equation*}
\ell(\gamma)\leq \frac{4\abs{\chi}^2} \epsilon +2\kappa \log c_1 +2\kappa \log t \fstop
\end{equation*}

The curve~$\gamma$ intersects~$\enl{K}{r/2}$ in at most~$2\abs\chi$ components (i.e.\ balls around zeroes of~$\psi$). In each such component $V= B_{r/2}(z)$ (for some~$z\in Z$) we can replace~$\gamma\restr_V$ by a shortest path on~$\partial V$ as the one in Lemma \ref{l:BD5.1} \ref{i:l:BD5.1:3}.

Since~$V$ is a ball, the length of~$\gamma\restr_V$ is bounded by half the length of the circumference of a great circle on~$V$, i.e.\begin{equation}\label{eq:l:BD5.3:1}
\pi\sinh(r/2)\leq \pi\sinh(r)\leq c_3 \, r \comma \qquad r<\frac{c_2}{\abs{\chi} t}<\frac{c_2}{\abs{\chi}}\fstop 
\end{equation}
 
By repeating this reasoning on each component~$V$ as above, we obtain a path $\gamma'\colon p_0\rar p_1$ satisfying~\iref{i:l:BD5.3:1} and such that:
\begin{align*}
\length(\gamma')&\leq\ell(\gamma)+2\abs\chi c_3\,r
\\
&\leq \frac{4\abs{\chi}^2} \epsilon +2\kappa\log c_1 +2\kappa \log t+2 \frac {c_2 c_3}{t} && (t>1)
\\
&\leq  \frac{4\abs{\chi}^2} \epsilon + 2\kappa\log c_1 +2\, c_2\, c_3+2\kappa \log t\fstop
\end{align*}

\iref{i:l:BD5.3:3} Let $z\in Z$ be a zero of~$\psi$ and fix~$w>0$.
Each component~$\alpha$ of~$\gamma$ in~$\partial \enl{K}{c_2r}$ has length at most $c_3\, r$ and each geodesic arc of~$\gamma$ connecting an endpoint of~$\alpha$ to~$\partial B_w(z)$ has length at most~$w$.
We now estimate
\begin{equation*}
\card{\pi_0\ttonde{\gamma\cap \overline{B_w(z)}}}\leq
\begin{cases}
0 & w<r/2
\\ 
1 & p_1,p_2\in \overline{B_w(z)}
\\ 
1 & p_1\in  \overline{B_w(z)}\comma p_2\not\in \overline{B_w(z)}
\\
1 & p_1,p_2\not\in \overline{B_w(z)}
\end{cases}
\end{equation*}

The first bound holds by definition.
The second holds by the convexity of hyperbolic balls: if $p_1,p_2\in B_w(z)$ then we can choose~$\gamma\subset B_w(z)$.
The third and fourth one follow from the fact that if~$\gamma$ has more than one component in~$B_w(z)$, then we can shortcut~$\gamma$ inside the ball.

If~$w=r/2$, then~$\gamma\restr_{\overline{B_w(z)}}\subset \partial B_w(z)$, and we may choose~$\gamma\restr_{B_w(z)}$ to be a circumference arc, so that~$\length\ttonde{\gamma\restr_{B_{r/2}(z)}}\leq \pi\sinh (r/2)\leq c_3 r$ by~\eqref{eq:l:BD5.3:1}.

If instead~$w>a_1 r$, then we may choose~$\gamma$ to be either a geodesic segment, or a union~$\gamma_1\cup\gamma_{2}\cup\gamma_3$, where~$\gamma_1$ and $\gamma_{2}$ are geodesic segments each connecting~$\partial B_w(z)$ to~$\partial B_{r/2}(z)$, and~$\gamma_3$ is a circumference arc on~$\partial B_{r/2}(z)$.
In the first case,~$\length(\gamma\restr_{B_w(z)})\leq 2w$.
In the second case,
\[
\length\ttonde{\gamma\restr_{B_w(z)}}\leq 2w+\pi\sinh(a_1 r)\leq 2w+c_3 r\leq 2w+c_3 r\fstop
\]
Thus, we obtain that:
\begin{equation*}
\length\ttonde{\gamma\cap \overline{B_w(z)}}\leq
\begin{cases}
0 & \text{if } w< r/2
\\
\displaystyle 2c_3 w & \text{if } w=r/2
\\
2w+c_3w & \text{if } w\geq r/2
\end{cases}\qquad\qquad \leq 2(1+c_3)w \comma
\end{equation*}
which concludes the proof.
\end{proof}
\end{lem}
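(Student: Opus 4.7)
The plan is to construct $\gamma$ in two stages: first obtain a rectifiable curve in the component $U$ containing both endpoints (using Lemma~\ref{l:BD5.1}\iref{i:l:BD5.1:3}), and then detour around each ball $B_{r/2}(z)$, $z\in Z$, that this curve happens to intersect, by replacing the offending piece with an arc on $\partial B_{r/2}(z)$. The smallness hypothesis $r<c_2/(\abs\chi t)$ is what keeps the total added length bounded by a constant independent of $t$, so that the final estimate only picks up the $2\kappa\log t$ term from Lemma~\ref{l:BD5.1}\iref{i:l:BD5.1:3}.

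For~\iref{i:l:BD5.3:1} and~\iref{i:l:BD5.3:2}, I would first apply Lemma~\ref{l:BD5.1}\iref{i:l:BD5.1:3} in $U$ (noting that $p_0,p_1$ lie in $\enl{U}{s}\supset\overline{X\setminus\enl K r}$ by the connectivity argument of Lemma~\ref{5.2}) to produce a curve $\widetilde\gamma\subset U$ of length at most $4\abs\chi^2/\epsilon+2\kappa\log c_1+2\kappa\log t$. Since $\card Z\leq 2\abs\chi$, the intersection $\widetilde\gamma\cap\enl K{r/2}$ has at most $2\abs\chi$ connected components, each sitting inside a ball $B_{r/2}(z)$. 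I replace each such piece by the shorter arc on $\partial B_{r/2}(z)$, which has length at most $\pi\sinh(r/2)\leq c_3 r$ by the hyperbolic half-circumference formula together with the inequality $\pi\sinh(x)\leq c_3 x$ valid in the range $x<c_2/\abs\chi$. Since $r<c_2/(\abs\chi t)$, summing gives an added length of at most $2\abs\chi\cdot c_3 r\leq 2c_2c_3/t\leq 2c_2c_3$ (using $t>1$), which combined with the length of $\widetilde\gamma$ yields exactly $a_1+2\kappa\log t$.

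For~\iref{i:l:BD5.3:3} I would split by cases on the relation between $w$ and $r/2$. If $w<r/2$, then by construction $\gamma$ stays outside $B_{r/2}(z)\supset B_w(z)$, so the intersection is empty. If $w=r/2$, the intersection lies entirely on $\partial B_{r/2}(z)$ and is a single arc of length at most $\pi\sinh(r/2)\leq c_3 r=2c_3 w$. If $w>r/2$, convexity of hyperbolic balls and the length-minimising property of the geodesic segments forces $\gamma\cap\overline{B_w(z)}$ to have at most one component, consisting of (at most) two radial-like geodesic segments joining $\partial B_w(z)$ to $\partial B_{r/2}(z)$ and one circular arc on $\partial B_{r/2}(z)$; this contributes at most $2w+c_3 r\leq (2+c_3)w\leq 2(1+c_3)w$. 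The assumption that $B_w(z)$ is embedded is what legitimises the convexity argument.

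The main obstacle is the $w>r/2$ case of~\iref{i:l:BD5.3:3}: a priori $\gamma$ could oscillate in and out of $\overline{B_w(z)}$, producing many small arcs on $\partial B_{r/2}(z)$ which would each contribute $c_3 r$ and destroy the linear-in-$w$ bound. The key observation that avoids this is that outside $\overline{B_{r/2}(z)}$ the curve $\gamma$ is built of length-minimising geodesic segments in the \emph{embedded} ball $B_w(z)$; any return to $\overline{B_w(z)}$ could then be shortened by a chord, contradicting minimality. Hence $\card{\pi_0(\gamma\cap\overline{B_w(z)})}\leq 1$, which is the crucial bound driving the final estimate.
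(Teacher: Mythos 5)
Your proposal follows the paper's proof essentially step for step: the same two-stage construction (apply Lemma~\ref{l:BD5.1}\iref{i:l:BD5.1:3} in $U$, then detour around the $\leq 2\abs\chi$ balls $B_{r/2}(z)$ by arcs of length $\leq \pi\sinh(r/2)\leq c_3 r$), the same bound $2\abs\chi c_3 r\leq 2c_2c_3/t\leq 2c_2c_3$ for the added length, and for~\iref{i:l:BD5.3:3} the same case split on $w$ versus $r/2$ together with the convexity/shortcut argument showing $\gamma\cap\overline{B_w(z)}$ has at most one component. The only minor imprecision is the intermediate inequality $2w+c_3r\leq(2+c_3)w$, which needs $r\leq w$ rather than $w>r/2$; but the final bound $2(1+c_3)w$ still holds since $w>r/2$ gives $c_3 r<2c_3 w$, so the argument is correct.
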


\blue{With~$m(r)$ as in~\eqref{eq:DefMr} we can now estimate~\eqref{keyestimate} and show our final result.}

\begin{proof}[Proof of Theorem~\ref{t:Main}]
Let~$r<c_2/\abs{\chi}\leq 2$.
Let $s$ be as in Lemma \ref{areaest} and choose $U$ to be the component of $X_\thick(s)$ containing the point $p_{\max}(s)$ as in Lemma \ref{4.6}.
Let $Z$ be the set of zeroes of $\psi$, $K\eqdef \overline{X\setminus U} \cup Z$, and $K'\eqdef  \overline{X\setminus U}$.
Let $W\eqdef \enl{X_\thick}{s+1}\setminus Z$, $p_1\in\partial \enl{K}{r}$, and $p_2=p_{\max}(s)\in \enl{X_\thick}{s}\setminus Z$.
Therefore, we have that $\langle \psi(p_2)\rangle=\UBL(s)$.
Moreover, let $\gamma\subset W$ be a path from $p_1$ to $p_2$ satisfying the conditions of Lemma \ref{5.3} and note that
\begin{equation*}
\dist(p,\partial W)\geq \min \set{1,\dist(p,Z)}\comma \qquad p\in\gamma \fstop
\end{equation*}

By Lemma \ref{4.6}\iref{i:l:BD4.6:2} we have that:
\[
\UBL(s+1)\leq e\cdot \UBL(s) \fstop
\]

By Theorem \cite[4.4]{BarDil96}, we have that:
\begin{align*}
\langle \psi(p_1)\rangle&\geq \langle \psi(p_2)\rangle \left(\frac{ \langle \psi(p_2)\rangle}{c_4 \cdot\UBL(s+1)}\right)^{-1+\exp\tonde{ \int_\gamma\frac{\diff s}{\tanh(1\wedge \dist(\gamma_s,Z))}}}
\\
&\geq \UBL(s)\left(\frac 1 {e\, c_4}\right)^{-1+\exp\tonde{\int_\gamma \coth\ttonde{1\wedge \dist(\gamma_s,Z)} \diff s}}
\\
&\geq e\, c_4\, \UBL(0)\cdot \tonde{\frac{1}{e\, c_4}}^{\exp\tonde{\int_\gamma \coth\ttonde{1\wedge \dist(\gamma_s,Z)} \diff s}} \comma
\intertext{where we can estimate~$\UBL(0)$ by Lemma~\ref{4.6}\ref{i:l:BD4.6:1},}
&\geq \frac{e\,c_4\,\ell}{16\abs{\chi}} \norm{\psi} \tonde{\frac{1}{e\, c_4}}^{\exp\tonde{\int_\gamma \coth\ttonde{1\wedge \dist(\gamma_s,Z)} \diff s}} 
\\
&=\frac{e\,c_4\, \ell}{16\abs{\chi}} \norm{\psi} \exp\tonde{-\log(e\,c_4)\exp\tonde{\int_\gamma\coth\ttonde{1\wedge \dist(\gamma_s,Z)} \diff s }}
\end{align*}

We now estimate~$\int_\gamma \coth(1\wedge  \dist(\gamma_s,Z)) \diff s$ from above by breaking it into two terms:
\begin{equation*}
\int_\gamma\frac{\diff s}{\tanh\ttonde{1\wedge  \dist(\gamma_s,Z)}}\leq \int_{\gamma\setminus Z(1)}\diff s+\int_{\gamma\cap Z(1)}\frac{\diff s}{\dist(\gamma_s,Z)}
\end{equation*}
The first term is bounded by $\ell(\gamma)$ while for the second term we have by Lemma~\ref{5.3}\iref{i:l:BD5.3:1}
\begin{align*}
\int_{\gamma\cap Z(1)}\frac{\diff s}{\dist(\gamma_s,Z)}\leq& \int_1^{\frac{2}{r}} \length\ttonde{\gamma\cap \enl{Z}{1/u}} \diff u
\\
\leq& \int_1^{\frac{2}{r}}\frac{2(1+c_3)}{u^2} \diff u = 2(1+c_3)(1-r/2)
\end{align*}
since~$r\leq 2$.

By Lemma \ref{5.3}\iref{i:l:BD5.3:1} we have that:
\begin{equation*}
\ell(\gamma)\leq a_1+2\kappa\,\log t
\end{equation*}

Thus:
\begin{align*}
\int_\gamma\frac{\diff s}{\tanh\ttonde{1\wedge  \dist(\gamma_s,Z)}}\leq &\ a_1+2\kappa\,\log t+2(1+c_3)(1-r/2) 
\\
=&\ a_1+2(1+c_3)+2\kappa\,\log t-(1+c_3) r
\end{align*}

Therefore, since~$\log(e\,c_4)>0$, for all $p_1\in\partial \enl{K}{r}$ we get:
\be
\langle \psi(p_1)\rangle\geq \frac{e\, c_4\, \ell}{16\abs\chi} \norm{\psi} \exp\tonde{-\log(e\, c_4) \exp\ttonde{ a_1+2(1+c_3)+2\kappa\,\log t-(1+c_3) r} }
\ee
Thus, by minimizing over~$p_1\in\partial \enl{K}{r}$ we obtain:
\be
m(r) \geq \frac{e\, c_4\,\ell}{16\abs\chi}\norm{\psi} \exp\tonde{-\log(e\, c_4) \exp\ttonde{ a_1+2(1+c_3)+2\kappa\,\log t-(1+c_3) r} }
\ee
which for $a_2\eqdef \log(e\, c_4) \, e^{a_1+2(1+c_3)}>0$ can be rewritten as:
\be 
m(r)\geq e\, c_4\, \frac{\ell}{16\abs\chi}\norm{\psi} \exp\ttonde{-a_2 t^{2\kappa} e^{-(1+c_3) r} }
\ee

Then, equation~\eqref{keyestimate} with $K\eqdef W$ becomes, for $r_0<\frac 1{4t}$,

\begin{align*}
1-\norm\psi\geq&\ \int_0^{r_0} m(r) \tonde{ t^{-1} \area\ttonde{X\setminus \enl{K}{r}}-\length\ttonde{\partial \enl{K}{r}} }\diff r\fstop
\intertext{By Lemma~\ref{areaest} we thus have that, for every~$r_0<\tfrac{1}{4t}$,}
1-\norm\psi \geq&\ \frac{e\, c_4\, \ell}{16\abs\chi t}\norm{\psi} \int_0^{r_0}  \exp\tonde{-a_2 t^{2\kappa} e^{-(1+c_3) r}} \ttonde{\pi/3 -\kappa r t (c_5+c_7) } \diff r
\\
\geq&\ \frac{e\, c_4\, \ell\, e^{-a_2 t^{2\kappa}}}{16\abs\chi t}\norm{\psi} \int_0^{r_0} \ttonde{\pi/3 -\kappa r t (c_5+c_7) } \diff r \fstop
\intertext{Maximizing over~$r_0\in \ttonde{0, \tfrac{1}{4t}}$ additionally so that the integrand is non-negative, we have therefore that} 
1-\norm\psi \geq &\ \frac{e\, c_4\,\ell\, e^{-a_2 t^{2\kappa}}}{16\abs\chi t}\norm{\psi} \int_0^{\frac{1}{4t}\wedge \frac{\pi}{3\kappa t (c_5+c_7)}} \ttonde{\pi/3 -\kappa r t (c_5+c_7) } \diff r
\\
=& \ \frac{e\, \pi^2\, c_4}{288\, \kappa (c_5+c_7)} \frac{\ell\, e^{-a_2 t^{2\kappa}}}{\abs\chi t^2}\norm{\psi}\comma
\end{align*}
and maximizing the right-hand side over~$t>1$, i.e.\ choosing $t=1$, we conclude that
\begin{align*}
\norm{\psi}\leq& \frac{1}{1+\displaystyle \frac{C\,\ell\, e^{-a_2}}{\kappa\abs{\chi}} }\comma \qquad C\eqdef \frac{e\, \pi^2\, c_4}{288\, (c_5+c_7)} \fstop \qedhere
\end{align*}
\end{proof}

\paragraph{Contraction factors of skinning maps}
We now apply our explicit bounds from Theorem \ref{t:Main} to get effective bounds on the contraction factor of the skinning map.

Let $N\in AH(M,\mathcal P)$ be a pared acylindrical manifold so that
\begin{itemize}
\item $\mathcal P\subset\partial M$ is a collection of pairwise disjoint closed annuli and tori; 
\item $\mathcal P$ contains all tori components of $M$ and $M$ is acylindrical relative to $\mathcal P$.
\end{itemize}
Let $\partial_0 M\eqdef \partial M\setminus\mathcal P$.
By \cite[p.~443]{McM1989} we have that, for every such~$N$,
 \begin{equation*}
\abs{\diff \sigma}=\abs{\diff \sigma^*} \fstop
\end{equation*}
By Theorem \ref{t:Main},
 \begin{equation*}
 \diff\sigma^*(\phi)=\sum_{U\in BN} \Theta_{U/X} \left(\phi\vert_U\right)\leq \max_{X\in \partial_0M} \frac{1}{1+C_{g,n,\ell}} \norm\phi\comma 
 \end{equation*}
where $\ell$ is the injectivity radius of the conformal boundary~$\partial_\infty N$, and $C_{g,n,\ell}$. Thus, we obtain the following corollary:

\begin{cor}
Let~$(M,\mathcal P)$ be a pared acylindrical hyperbolic manifold.
Then, the skinning map at $N\in AH(M,\mathcal P)$ has contraction factor bounded by
\be \abs{d\sigma}\leq  \max_{X\in \partial_0M} \frac{1}{1+C_{g,n,\ell}} \fstop\ee
\end{cor}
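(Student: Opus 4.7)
The plan is to reduce the bound on $\abs{d\sigma}$ to the bound on the Poincar\'e series operator provided by Theorem~\ref{t:Main}. First, I would invoke McMullen's duality relation $\abs{d\sigma}=\abs{d\sigma^*}$ from \cite[p.~443]{McM1989}, which allows us to work with the co-derivative acting on integrable holomorphic quadratic differentials rather than with the derivative itself. This is the natural setting because the identity~\eqref{eq:Intro:McMullen} expresses $d\sigma^*$ as a sum of push-forwards $\Theta_{U/X}$ across the collection $BN$ of sub-surfaces (the leopard spots) sitting inside the image of $\sigma$.

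Next, I would use the standing hypothesis that $(M,P)$ is pared acylindrical: by~\cite{McM1989} together with the relative version for $P\ne\emp$, each $U\in BN$ is either a disk or a punctured disk (the punctures arising from the peripheral annuli in $P$). This is precisely the setting in which Theorem~\ref{t:Main} applies, giving
\[
\norm{\Theta_{U/X}}_\op<\frac{1}{1+C_{g,n,\ell}}
\]
where the constant on the right only involves the invariants $(g,n,\ell)$ of the target surface $X\in\pi_0(\partial_0 M)$.

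Fix $\phi\in Q(\im\sigma)$ with $\norm\phi=1$. Using~\eqref{eq:Intro:McMullen} together with the triangle inequality for the $L^1$-norm $\norm{\emparg}$ on quadratic differentials, and the fact that the leopard spots $U\in BN$ have pairwise disjoint interiors (so that $\sum_{U\in BN}\norm{\phi\restr U}\leq \norm\phi$), I would obtain
\begin{align*}
\norm{d\sigma^*(\phi)}\leq \sum_{U\in BN} \norm{\Theta_{U/X}(\phi\restr U)}\leq \max_{U\in BN}\norm{\Theta_{U/X}}_\op\cdot\norm\phi\leq \max_{X\in\partial_0 M}\frac{1}{1+C_{g,n,\ell}}\fstop
\end{align*}
Taking the supremum over $\phi$ and then over the component $X\in\pi_0(\partial_0 M)$ on which $\Theta_{U/X}$ lands gives the stated bound on $\abs{d\sigma^*}$, hence on $\abs{d\sigma}$ by duality.

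The only subtle point in this plan is the disjointness/orthogonality step: one has to make sure that the sub-surfaces $U\in BN$ contribute to the sum in a way compatible with the $L^1$-norm. This follows from the fact that the leopard spots have disjoint interiors inside $\im\sigma$, so the restriction map $\phi\mapsto(\phi\restr U)_U$ is norm-non-increasing. Everything else is bookkeeping: identifying each $U$ as a (possibly punctured) disk — so that Theorem~\ref{t:Main} applies — and taking the max over the finitely many components of $\partial_0 M$.
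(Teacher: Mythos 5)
Your proposal is correct and follows essentially the same route as the paper: invoke McMullen's identity $\abs{\diff\sigma}=\abs{\diff\sigma^*}$, expand $\diff\sigma^*(\phi)$ via the leopard-spot formula~\eqref{eq:Intro:McMullen}, and apply Theorem~\ref{t:Main} to each disk or punctured disk $U\in BN$. You are in fact slightly more careful than the paper in spelling out why the sum $\sum_U\norm{\phi\restr_U}\leq\norm{\phi}$ holds (disjointness of the spots' interiors), a step the paper leaves implicit.
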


{\small

}

			\medskip
	\small
	
%
%
%
%
%
%
%
%
	\end{document}